\theoremstyle{plain}
\newtheorem{cor}[subsection]{Corollary}
\newtheorem{lem}[subsection]{Lemma}
\newtheorem{thm}{Theorem}
\newtheorem{prop}[subsection]{Proposition}
\theoremstyle{definition}
\newtheorem{defn}[subsection]{Definition}
\theoremstyle{remark}
\newtheorem{rem}[subsection]{Remark}
\newcommand{\Z}{\mathbb{Z}}
\newcommand{\R}{\mathbb{R}}
\newcommand{\C}{\mathbb{C}}
\newcommand{\KK}{\mathbb{K}}
\newcommand{\opn}{\operatorname}
\newcommand{\ZZ}{\mathcal{Z}}
\newcommand{\K}{\mathcal{K}}
\newcommand{\LL}{\mathcal{L}}
\newcommand{\I}{\mathcal{I}}
\newcommand{\D}{\mathscr{D}}
\numberwithin{equation}{section}
\title{On the rank of the double cohomology of moment-angle complexes}
\author{Zhilei Zhang}
\address{Department of Mathematics, Nankai University, No.94 Weijin Road, Tianjin 300071, P. R. China}
\email{15829207515@163.com}
\thanks{}
\subjclass[2020]{13F55, 55N10, 55U10}
\thanks{Keywords: Moment-angle complex, double cohomology, rank.}
\begin{document}

\begin{abstract}
    In \cite{LPSS-2023}, the authors construct a cochain complex
    $C\!H^*(\ZZ_{\K})$ on the cohomology of a moment-angle complex $\ZZ_{\K}$
    and call the resulting cohomology the double cohomology,
    $H\!H^*(\ZZ_{\K})$. In this paper, we study the change of
    rank in double cohomology after gluing an $n$-simplex to
    a simplicial complex $\K$ in certain conditions. As an application,
    we give a positive answer to an open problem in \cite{LPSS-2023}:
    For any even integer $r$, there always
    exists a simlicial complex $\K$ such that 
    $\opn{rank} H\!H^*(\ZZ_{\K})=r$.
\end{abstract}

\maketitle

\section{Introduction}

The motivation for defining $H\!H^*(\ZZ_{\K})$ is from persistent cohomology
$P\!H^*(-)$. For a finite pseudometric space $S$, 
an important property of $P\!H^*(S)$ is stability. In other words, perturbing $S$ slightly will not change $P\!H^*(S)$ much. 
In \cite{BLPSS-2024}, the authors show this 
stability holds for the associated family 
$\{H\!H^*(\ZZ_{\K(t)})\}_{t\ge 0}$. It also provides a new
approach for topological data analysis
via bigraded persistent cohomology of the
corresponding moment-angle complexes.

Let $\K$ be a simplicial complex on the set $[m]=\{1,2,\cdots,m\}$
and $\ZZ_{\K}$ be the associated moment-angle complex. The cohomology 
$H^*(\ZZ_{\K})$ can be computed as a direct sum 
$\bigoplus_{I\subset [m]} \widetilde{H}^*(\K_I)$
of the reduced simplicial cohomology of full subcomplexes in $\K$.

In \cite{LPSS-2023}, the authors construct a cochain complex structure
$C\!H^*(\ZZ_{\K})$ on $H^*(\ZZ_{\K})$ and define the double cohomology
$H\!H^*(\ZZ_{\K})$ to be its cohomology. Then they give some properties 
of double cohomology and compute many examples. It is noted that
these examples all satisfy $\opn{rank} H\!H^*(\ZZ_{\K})=2^n$ for some
$n\ge 0$. So the authors propose a problem \cite[Question 8.9]{LPSS-2023}: Let $r$ be a positive even integer different from a power of two. Does there
exist a simplicial complex $\K$ such that 
$\opn{rank} H\!H^*(\ZZ_{\K})=r$? 

Recently, \cite{Han-2023} gives an example such that $\opn{rank} H\!H^*(\ZZ_{\K})=6$,
which provides a solution to this problem for $r=6$.
By taking joins, we can also get the 
example for any $r=2^{s+t}3^t$ with $s,t\ge 0$.
For general even $r$, this question is solved in Theorem \ref{thm: complex with any even rank}.

Moreover, let $\K'=\K\cup_{\sigma} \Delta^n$
be a simplicial complex obtained from a nonempty
simplicial complex $\K$ by gluing an $n$-simplex along a proper, possibly empty, face $\sigma\in \K$.
Then $\opn{rank} H\!H^*(\ZZ_{\K'})=1$ or $2$ \cite[Theorem 6.7]{LPSS-2023}.
It seems that there is no explicit relationship between $\opn{rank} H\!H^*(\ZZ_{\K'})$ and $\opn{rank} H\!H^*(\ZZ_{\K})$. However, when we glue
an $n$-simplex along its boundary to a special class of simplicial complex,
these two rank have explicit realtionship. In this paper, we develop 
a new method to determine this relationship.
Our main results are as follows.

\begin{thm}
    \label{thm: change of rank}
    Let $\Delta^n$ be the $n$-simplex on $[n+1]$ and $\K$ be a simplicial complex on $[m]$ with $m\ge n+2\ge 3$. Suppose the following conditions hold:
    \begin{enumerate}
    \item 
    $[n+1]\not\in \K$.
    \item 
    For any $J=\{1,\cdots,n+1,j\}$ with $n+2\le j\le m$ and $1\le i\le n+1$,
    $J\!\setminus\! \{i\}\in \K$.
    \item 
    For any $J=\{1,\cdots,n+1,j,k\}$ with $n+2\le j<k\le m$, 
    $\opn{rank} \widetilde{H}^n(\K_J)\le 1$.
    \item 
    For any $J=\{1,\cdots,n+1,j,k\}$ with $n+2\le j<k\le m$,
    either $J\!\setminus\! \{i\}\in\K$ for all $1\le i\le n+1$, 
    or $J\!\setminus\! \{i\}\not\in\K$ for all $1\le i\le n+1$.
    \end{enumerate}
    Let $\lambda_{[n+1]}\K=\K\cup_{\partial\Delta^n}\Delta^n
    =\K\cup [n+1]$. Then we have
    \begin{enumerate}
    \item 
    $\opn{rank} H\!H^*(\ZZ_{\lambda_{[n+1]}\K})=\opn{rank} H\!H^*(\ZZ_{\K})-2$, if there exists some $J=\{1,\cdots,$ $n+1,j,k\}$ with $n+2\le j<k\le m$ such that $\opn{rank} \widetilde{H}^n(\K_J)=1$.
    \item 
    $\opn{rank} H\!H^*(\ZZ_{\lambda_{[n+1]}\K})=\opn{rank} H\!H^*(\ZZ_{\K})$,
    otherwise.
    \end{enumerate}
\end{thm}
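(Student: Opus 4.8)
The plan is to analyze the effect of the gluing on the bigraded structure of $C\!H^*(\ZZ_{\K})$ directly, using the decomposition $H^*(\ZZ_{\K})=\bigoplus_{I\subset[m]}\widetilde H^*(\K_I)$ and the fact that $\lambda_{[n+1]}\K$ differs from $\K$ only in which full subcomplexes $\K_I$ change. First I would identify precisely which subsets $I\subset[m]$ have $(\lambda_{[n+1]}\K)_I\ne \K_I$: since $\lambda_{[n+1]}\K=\K\cup[n+1]$, the only new face is $[n+1]$ itself, so $\K_I$ changes exactly when $[n+1]\subseteq I$, i.e. $I=[n+1]$ or $I=[n+1]\cup\{j\}$ or $I=[n+1]\cup\{j,k\}$ or larger (but conditions (1)--(4) are designed so that the only reduced cohomology that moves lives in the subsets $J$ appearing in the hypotheses, in degrees $n$ and $n+1$). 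For $I=[n+1]$: $\widetilde H^*(\K_{[n+1]})$ has rank $1$ in degree $n-1$ by condition (1) (the boundary of a simplex), while $\widetilde H^*((\lambda_{[n+1]}\K)_{[n+1]})=\widetilde H^*(\Delta^n)=0$. For $I=[n+1]\cup\{j\}$: condition (2) forces $\K_I$ to be the boundary of an $(n+1)$-simplex with the single interior $(n+1)$-face missing together with... — more carefully, condition (2) says all $n$-faces through $[n+1]$ are present, so $\K_I$ is $\partial\Delta^{n+1}$ minus nothing relevant, and one computes $\widetilde H^n(\K_I)$ versus $\widetilde H^n((\lambda_{[n+1]}\K)_I)$. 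For $I=[n+1]\cup\{j,k\}$, conditions (3) and (4) pin down $\widetilde H^n(\K_J)$ to have rank $0$ or $1$, and adding $[n+1]$ either kills a class or does nothing. So the net change in $\opn{rank}H^*(\ZZ_{\K})$ is completely bookkept.

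The main work, and the part I expect to be the real obstacle, is tracking the \emph{differential} $d_{C\!H}$ rather than just the ranks of the cochain groups. The double cohomology is the cohomology of $C\!H^*(\ZZ_{\K})$, so I need to understand how the maps in the double complex restrict to the summands indexed by the $J$'s above and to their immediate neighbors (subsets obtained by deleting one element of $\{1,\dots,n+1\}$ or by adding a new vertex). Conditions (2) and (4) are exactly what is needed to make these neighboring subcomplexes behave uniformly — either all the deletions $J\setminus\{i\}$ are faces or none are — so that the relevant piece of $C\!H^*$ splits off as a small, explicitly computable subcomplex (a direct sum of copies of a fixed $2$- or $3$-term complex, one for each pair $\{j,k\}$). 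The key lemma I would isolate: the inclusion $\K\hookrightarrow\lambda_{[n+1]}\K$ induces a chain map $C\!H^*(\ZZ_{\lambda_{[n+1]}\K})\to C\!H^*(\ZZ_{\K})$ whose mapping cone (or the relative complex) is quasi-isomorphic to a sum of acyclic-or-rank-$2$ pieces indexed by the pairs $\{j,k\}$ with $n+2\le j<k\le m$; a piece contributes $0$ to the homology of the cone when $\widetilde H^n(\K_J)=0$ and contributes rank $2$ when $\widetilde H^n(\K_J)=1$. Getting the signs and the precise form of $d_{C\!H}$ on these summands right — in particular checking that the contributions do not cancel against contributions from the $I=[n+1]$ and $I=[n+1]\cup\{j\}$ strata — is the delicate bookkeeping.

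Granting that lemma, the theorem follows from the long exact sequence of the pair (or the mapping-cone triangle): $\opn{rank}H\!H^*(\ZZ_{\lambda_{[n+1]}\K})-\opn{rank}H\!H^*(\ZZ_{\K})$ equals $\pm$ the rank of the homology of the cone, which by the case analysis is $-2$ if at least one pair $\{j,k\}$ realizes $\widetilde H^n(\K_J)=1$ and $0$ otherwise. I would double-check consistency with \cite[Theorem 6.7]{LPSS-2023} in the degenerate small cases, and verify the parity: since the change is always $0$ or $-2$, this is compatible with (and will feed into) the construction in Theorem \ref{thm: complex with any even rank} of complexes of arbitrary even rank, by starting from a complex of large even rank and stripping simplices. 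The one genuinely technical point to nail down is that conditions (3)--(4) really do prevent any higher subsets $I\supsetneq[n+1]\cup\{j,k\}$ from contributing — i.e. that $\widetilde H^*(\K_I)$ is unchanged by adding $[n+1]$ once $|I|\ge n+4$ — which should follow from a Mayer--Vietoris / nerve argument since $[n+1]$ is then an interior face of a full simplex sitting inside $\K_I$, but it needs to be stated and proved carefully.
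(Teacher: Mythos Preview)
Your overall strategy---identify which full subcomplexes $\K_I$ change, compute the effect on $\widetilde H^*$, and then track $d'$---is the same as the paper's, but two of your key claims are wrong and they are exactly where the content lies.

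First, your assertion that for $|I|\ge n+4$ the group $\widetilde H^*(\K_I)$ is unchanged by adding $[n+1]$ is false. The paper's Lemma on $(\lambda\K)_J$ shows that for \emph{every} $J$ with $[n+1]\subset J$ and $J\ne[n+1]$, condition~(2) makes $\partial\Delta^n$ contractible inside $\K_J$, so $(\lambda\K)_J\simeq \K_J\vee S^n$ and $\widetilde H^n$ \emph{gains} a rank-one summand $\R[\alpha_J]$. There is no cutoff at $|J|=n+3$; the change propagates through all larger $J$. Relatedly, your description of the $|J|=n+3$ case (``adding $[n+1]$ either kills a class or does nothing'') has the wrong sign: it always adds the class $\alpha_J$.

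Second, and consequently, the relative/cone complex does \emph{not} split as a direct sum over pairs $\{j,k\}$. The extra classes $\alpha_J$, one for each $J\supsetneq[n+1]$, are linked by $d'$: equation $d'_{\lambda\K}(\alpha_J)=\sum_k(-1)^k\alpha_{J\setminus\{i_k\}}$ exhibits $(B_k,d')=\bigoplus_{|J|=k}\R[\alpha_J]$ as the augmented chain complex of the simplex $\Delta^{m-n-2}$ on the vertices $\{n+2,\dots,m\}$. This complex is acyclic except for one copy of $\R$ in bottom degree, and it is this global acyclicity---not a pair-by-pair cancellation---that explains why the answer is always $-2$ or $0$ regardless of how many pairs satisfy $\opn{rank}\widetilde H^n(\K_J)=1$. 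The actual dichotomy in the theorem comes from whether the map $d'_{\lambda\K,n+2}$ restricted to $C'\oplus B_{n+3}$ is onto $B_{n+2}$ (which happens precisely when some $C'\ne 0$, i.e.\ some $\widetilde H^n(\K_J)$ has rank~$1$), and this requires the explicit argument with the element $\beta'$ in the paper. Your proposed lemma, as stated, would give a change of $-2$ times the number of such pairs, which is wrong. You need the simplex-chain-complex structure on the $\alpha_J$'s; that is the missing idea.
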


Based on Theorem \ref{thm: change of rank}, for any even $r$, we can construct the simplicial complex $\K$ such that $\opn{rank} H\!H^*(\ZZ_{\K})=r$,
which give a solution to \cite[Question 8.9]{LPSS-2023}.

\begin{thm}
    \label{thm: complex with any even rank}
    There exist a family of simplicial complexes $\{\K_{2r}:r\ge 1\}$ such
    that $\opn{rank} H\!H^*(\ZZ_{\K_{2r}})=2r$.
\end{thm}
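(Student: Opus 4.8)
The plan is to realize every positive even integer by starting from the value $2$ and repeatedly applying two rank-changing moves. The first is \emph{joining with $\partial\Delta^{1}$}: this multiplies $\opn{rank} H\!H^*(\ZZ_{-})$ by $\opn{rank} H\!H^*(\ZZ_{\partial\Delta^{1}}) = 2$ (where $\ZZ_{\partial\Delta^{1}} = S^{3}$; cf.\ \cite{LPSS-2023}), by the multiplicativity of $\opn{rank} H\!H^*$ under joins that is already used in the introduction. The second is the gluing operation of Theorem~\ref{thm: change of rank}, which in its case~(1) lowers the rank by exactly $2$. From a realizable value $M$ one thus obtains both $2M$ and, when case~(1) applies, $2M - 2$; and an easy induction on the even target value $v$ shows this is enough: if $v = 2k$ with $k$ even then $k$ is realizable and $v = 2\cdot k$; if $v = 2k$ with $k$ odd then $k+1$ is realizable and $v = 2(k+1) - 2$; the base case is $v = 2$.

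To keep Theorem~\ref{thm: change of rank} applicable, I will only ever glue along a two-point set — so $n = 1$ and $[n+1] = \{1,2\}$ — and only to complexes of the special form $\K = \partial\Delta^{1}_{\{1,2\}} * \LL$, where $\LL$ is a simplicial complex on $\{3, \dots, m\}$ with $m \ge 4$. For such $\K$ the hypotheses of Theorem~\ref{thm: change of rank} are straightforward to check: $\{1,2\} \notin \K$ because $\{1,2\} \notin \partial\Delta^{1}$; every edge $\{1,j\}$ and $\{2,j\}$ with $j \ge 3$ lies in $\K$ because $\{1\},\{2\} \in \partial\Delta^{1}$ and $\{j\} \in \LL$; and for every $J = \{1,2,j,k\}$ one has $\K_{J} = \partial\Delta^{1}_{\{1,2\}} * \LL_{\{j,k\}}$, which is a $4$-cycle — so $\widetilde H^{1}(\K_{J}) \cong \KK$ — when $\{j,k\} \notin \LL$, and is a cone, hence contractible — so $\widetilde H^{1}(\K_{J}) = 0$ — when $\{j,k\} \in \LL$. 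Hence hypotheses~(2),~(3),~(4) hold, and conclusion~(1) of Theorem~\ref{thm: change of rank} is the relevant one precisely when $\LL$ has a pair of non-adjacent vertices. In that case
\[
\opn{rank} H\!H^*(\ZZ_{\lambda_{\{1,2\}}(\partial\Delta^{1} * \LL)}) = 2\,\opn{rank} H\!H^*(\ZZ_{\LL}) - 2 .
\]

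I will then define $\{\K_{2r}\}_{r\ge1}$ by induction on $r$, proving the strengthened statement that for each $r$ there is a complex $\K_{2r}$ with $\opn{rank} H\!H^*(\ZZ_{\K_{2r}}) = 2r$ that moreover contains two non-adjacent vertices. Put $\K_{2} = \partial\Delta^{1}$. For $r \ge 2$: if $r$ is even, set $\K_{2r} = \partial\Delta^{1} * \K_{r}$, which is legitimate because $\K_{r}$ has even rank $r < 2r$ and is thus already constructed, has rank $2r$ by join-multiplicativity, and has the non-adjacent pair formed by the two vertices of $\partial\Delta^{1}$; if $r$ is odd (so $r \ge 3$), set $\K_{2r} = \lambda_{\{1,2\}}(\partial\Delta^{1} * \K_{r+1})$, which is legitimate because $\K_{r+1}$ has even rank $r+1 < 2r$ and by the inductive hypothesis has a non-adjacent pair, so that the displayed identity gives $\opn{rank} H\!H^*(\ZZ_{\K_{2r}}) = 2(r+1) - 2 = 2r$; and $\K_{2r}$ still has a non-adjacent pair, inherited from $\K_{r+1}$, since passing from $\partial\Delta^{1} * \K_{r+1}$ to $\lambda_{\{1,2\}}(\partial\Delta^{1} * \K_{r+1})$ only adds the single edge $\{1,2\}$ and so destroys no non-adjacency among the vertices of $\K_{r+1}$. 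This completes the construction.

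I expect the only point needing care to be the bookkeeping in the previous paragraph: for each target $2r$ one must decide, by the parity of $r$, whether to reach it by doubling (from $\K_{r}$) or by gluing (from $\K_{r+1}$) so that the auxiliary complex is strictly smaller and of even rank, and one must propagate the auxiliary clause ``$\K_{2r}$ has a non-adjacent pair'' through the induction so the gluing move remains available. Beyond this, the argument rests on the hypothesis-checking of the second paragraph — a direct computation of the reduced cohomology of joins of two-point sets with one- or two-vertex, zero- or one-edge complexes — together with the two inputs taken as given: multiplicativity of $\opn{rank} H\!H^*$ under joins, and Theorem~\ref{thm: change of rank} itself.
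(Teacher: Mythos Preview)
Your proposal is correct and follows essentially the same route as the paper: an induction on $r$ maintaining the auxiliary invariant ``$\K_{2r}$ has a non-adjacent pair of vertices,'' using the join with $\partial\Delta^{1}$ to double the rank when $r$ is even and the $n=1$ case of Theorem~\ref{thm: change of rank} applied to $\partial\Delta^{1}*\K_{r+1}$ to hit $2(r+1)-2$ when $r$ is odd. The only cosmetic difference is your base case $\K_{2}=\partial\Delta^{1}$ (two points, rank $2$ directly) versus the paper's choice of starting from the $4$-cycle $\K_{4}$ and obtaining $\K_{2}$ by gluing a diagonal; your choice is in fact slightly cleaner.
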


\subsection*{Organization of the paper} 
In Section \ref{sec: preliminaries}, we introduce the definition of 
double cohomology and give several useful lemmas. In Section
\ref{sec: The proof of Theorem 1}, we complete the proof
of Theorem \ref{thm: change of rank}. In Section 
\ref{sec: the proof of thm 2}, with the aid of Theorem \ref{thm: change of rank} when $n=1$, we complete the proof of Theorem
\ref{thm: complex with any even rank}.


\section{Preliminaries}
\label{sec: preliminaries}

Let $\K$ be a simplicial complex on the set $[m]=\{1,\cdots,m\}$.
A one-element simplex $\{i\}\in\K$
is called a vertex of $\K$.
We also assume that $\varnothing\in\K$
and $\{i\}\in\K$ for all $i\in [m]$.

We use $\opn{cat}(\K)$ to denote the face category of $\K$, with objects
$I\in\K$ and morphisms $I\subset J$.
For each simplex $I\in\K$, 
we have the following topological space
$$(D^2,S^1)^I:=\{ (z_1,\cdots,z_m)\in (D^2)^m:|z_j|=1 \text{ if } j\not\in
I\}\subset (D^2)^m.$$
Then $(D^2,S^1)^I$ is a natural subspace of $(D^2,S^1)^J$ if $I\subset J$.
Now we have a diagram 
$$\D_{\K}: \opn{cat(\K)}\to \opn{Topological\ spaces},$$
which mapping $I\in\K$ to $(D^2,S^1)^I$.

\begin{defn}[\cite{BPbook-2015}, Chapter 4]
    \label{def: moment angle complex}
    The moment-angle complex corresponding to $\K$ is defined by
    $$\ZZ_{\K}:= \opn{colimit} \D_{\K} =\bigcup_{I\in\K} (D^2,S^1)^I
    \subset (D^2)^m.$$
\end{defn}

For each simplicial complex $\K$, let $\I_{\K}$ be the ideal of $\Z[v_1,\cdots,v_m]$ generated by 
monomials $\prod_{i\in I} v_i$ for which $I\subset [m]$ is not
a simplex of $\K$. Then we define the face ring of $\K$ by
$$\Z[\K]:=\Z[v_1,\cdots,v_m]/\I_{\K}.$$

There are several equivalent descriptions of the cohomology ring
$H^*(\ZZ_{\K})$.

\begin{prop}[\cite{BBP-2004}, \cite{BT-2000}]
    \label{prop: cohomology ring of Zk}
    There are isomorphisms of bigraded commutative algebras
    \begin{equation*}
    \begin{split}
        H^*(\ZZ_{\K})\cong & \opn{Tor}_{\Z[v_1,\cdots,v_m]}(\Z[\K],\Z)\\
        \cong & H(\Lambda[u_1,\cdots,u_m]\otimes \Z[\K],d)\\
        \cong & \bigoplus_{I\subset [m]} \widetilde{H}^*(\K_I).
    \end{split} 
    \end{equation*}
    The differential $d$ in second isomorphism is determined by
    $du_i=v_i, dv_i=0$. 
    In the last isomorphism, $\widetilde{H}^*(\K_I)$
    denotes the reduced simplicial cohomology of the full subcomplex
    $\K_I\subset \K$ (the restriction of $\K$ to $I$).
\end{prop}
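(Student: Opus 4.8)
The plan is to treat the middle object, the cohomology of the differential bigraded algebra $(\Lambda[u_1,\dots,u_m]\otimes\Z[\K],d)$ with $du_i=v_i$ and $dv_i=0$, as a bridge, and to connect it to the two outer terms separately.

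I would first dispose of the identification with $\opn{Tor}$ by homological algebra. The complex $(\Lambda[u_1,\dots,u_m]\otimes\Z[v_1,\dots,v_m],d)$ with $du_i=v_i$ is precisely the Koszul complex of the regular sequence $v_1,\dots,v_m$, hence a free resolution of $\Z$ as a module over $\Z[v_1,\dots,v_m]$. Applying $\Z[\K]\otimes_{\Z[v_1,\dots,v_m]}(-)$ to this resolution yields $(\Lambda[u_1,\dots,u_m]\otimes\Z[\K],d)$ with the induced differential, so taking cohomology computes $\opn{Tor}_{\Z[v_1,\dots,v_m]}(\Z[\K],\Z)$. Keeping track of the internal grading ($v_i$ in degree $2$, $u_i$ exterior of degree $1$) together with the homological (resolution) degree makes this an isomorphism of bigraded algebras, identifying $\opn{Tor}_{\Z[v_1,\dots,v_m]}(\Z[\K],\Z)$ with $H(\Lambda[u_1,\dots,u_m]\otimes\Z[\K],d)$.

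Next I would prove Hochster's decomposition by exploiting the fine $\Z^m_{\ge 0}$-grading on $\Lambda[u_1,\dots,u_m]\otimes\Z[\K]$ in which $u_i$ and $v_i$ both carry multidegree the $i$-th basis vector. Since $d$ preserves multidegree, the complex splits as a direct sum of its multigraded pieces. The crux is a vanishing lemma: in any multidegree having a coordinate $\ge 2$ the piece is acyclic (the Koszul differential $u_i\mapsto v_i$ contracts each such direction), so only squarefree multidegrees contribute. For the squarefree multidegree supported on $I\subset[m]$, the surviving monomials are the $v_{I\setminus S}\,u_S$ with $S\subseteq I$ and $I\setminus S\in\K$; I would identify this piece, after a degree shift, with the augmented simplicial cochain complex of the full subcomplex $\K_I$ by matching $v_{I\setminus S}u_S$ with the cochain dual to the simplex $I\setminus S$ and checking that $d$ becomes the simplicial coboundary. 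Summing over all $I$ gives the identification with $\bigoplus_{I\subset[m]}\widetilde H^*(\K_I)$.

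Finally I would connect $H^*(\ZZ_{\K})$ to this algebraic model. Giving $D^2$ the CW structure with a single cell in each dimension $0,1,2$ and $(D^2)^m$ the product structure realizes $\ZZ_{\K}\subset (D^2)^m$ as a CW subcomplex whose cells are the pairs of disjoint subsets $(J,L)$ with $J\in\K$ ($J$ carrying the $2$-cells and $L$ the $1$-cells). Sending the cell $(J,L)$ to the monomial $v_J u_L$ identifies the cellular cochain complex with the smaller model $R^*(\K)=\Lambda[u_1,\dots,u_m]\otimes\Z[\K]/(v_i^2=u_iv_i=0)$, and an inspection of the attaching maps shows the cellular coboundary is again $du_i=v_i$; this already computes $H^*(\ZZ_{\K})$ combinatorially. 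The main obstacle is then the comparison between the geometric model $R^*(\K)$, which only involves squarefree powers of the $v_i$, and the full Koszul model $\Lambda[u_1,\dots,u_m]\otimes\Z[\K]$ appearing in the statement: I would show the evident map between them is a \emph{quasi-isomorphism}, which reduces, via the same $\Z^m$-grading, to the vanishing lemma proved for Hochster's formula. (Alternatively, the isomorphism $H^*(\ZZ_{\K})\cong\opn{Tor}$ can be extracted from the Eilenberg--Moore spectral sequence of the bundle $\ZZ_{\K}\to DJ(\K)\to BT^m$, whose $E_2$-page is this $\opn{Tor}$ and which collapses because $H^*(BT^m)$ is concentrated in even degrees.)
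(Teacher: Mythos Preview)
Your sketch is correct and follows the standard route found in the cited references (and in \cite{BPbook-2015}): the Koszul complex computes $\opn{Tor}$, the $\Z^m$-multigrading plus the acyclicity of non-squarefree pieces gives Hochster's decomposition, and the cellular model of $\ZZ_{\K}\subset (D^2)^m$ identifies the topological cohomology with the algebraic one. Note, however, that the paper does \emph{not} supply its own proof of this proposition: it is quoted from \cite{BBP-2004} and \cite{BT-2000} as background, so there is no argument in the paper to compare yours against.
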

\begin{rem}
    In conventions, $\widetilde{H}^*(\K_{\varnothing})=\widetilde{H}^{-1}(\K_{\varnothing})=\Z$.
    The last isomorphism is comes from Hochster’s theorem describing 
    $\opn{Tor}_{\Z[v_1,\cdots,v_m]}(\Z[\K],\Z)$
    as a sum of the cohomologies of full subcomplexes. In fact,
    this isomorphism is the sum of
    \begin{equation*}
        H^p(\ZZ_{\K})\cong \bigoplus_{I\subset [m]} 
        \widetilde{H}^{p-|I|-1}(\K_I).
    \end{equation*}
\end{rem}

Now we give the bigraded components of $H^*(\ZZ_{\K})$ by
$$H^{-k,2l}(\ZZ_{\K})\cong \bigoplus_{I\subset [m]:|I|=l} 
        \widetilde{H}^{l-k-1}(\K_I),$$
and thus 
$$H^p(\ZZ_{\K})= \bigoplus_{-k+2l=p} H^{-k,2l}(\ZZ_{\K}).$$

\subsection{Double cohomology}
Given $i\in I$, the conclusion $\K_{I\setminus \{i\}}\hookrightarrow \K_I$
induces the homomorphism
$$\psi_{p;i,I}: \widetilde{H}^p(\K_I)\to
\widetilde{H}^p(\K_{I\setminus \{i\}}).$$

Then for a fixed $I\subset [m]$, we define
\begin{equation}
    \label{equ: def of dp}
    d_p'=(-1)^{p+1} \sum_{i\in I} \varepsilon(i,I) \psi_{p;i,I}:
    \widetilde{H}^p(\K_I)\to \bigoplus_{i\in I} 
    \widetilde{H}^p(\K_{I\setminus \{i\}}),
\end{equation}
where 
$$\varepsilon(j,I)=(-1)^{\#\{i\in I:i<j\}}.$$

Now we define $d':H^*(\ZZ_{\K})\to H^*(\ZZ_{\K})$ using the 
decomposition given in Proposition 
\ref{prop: cohomology ring of Zk}, which acts on the bigraded components of $H^*(\ZZ_{\K})$ as 
$$d': H^{-k,2l}(\ZZ_{\K})\to H^{-k+1,2l-2}(\ZZ_{\K}).$$

\begin{lem}[\cite{LPSS-2023}]
    \label{lem: d'd'=0}
    $(d')^2=0$.
\end{lem}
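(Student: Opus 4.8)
The plan is to verify that $d'\circ d'=0$ by restricting attention to a single summand $\widetilde{H}^p(\K_I)$ and tracking where it lands after two applications of $d'$. Since $d'$ decreases $|I|$ by one, the composition sends $\widetilde{H}^p(\K_I)$ into $\bigoplus_{i\neq j\in I}\widetilde{H}^p(\K_{I\setminus\{i,j\}})$, so it suffices to fix an unordered pair $\{i,j\}\subset I$ and show that the component of $(d')^2$ landing in $\widetilde{H}^p(\K_{I\setminus\{i,j\}})$ vanishes. There are exactly two routes from $I$ to $I\setminus\{i,j\}$: remove $i$ first then $j$, or remove $j$ first then $i$. So I would compute the coefficient of each route and show the two contributions cancel.

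First I would write out the two routes explicitly. Removing $i$ then $j$ gives the map $(-1)^{p+1}\varepsilon(j,I\setminus\{i\})\,\psi_{p;j,I\setminus\{i\}}\circ(-1)^{p+1}\varepsilon(i,I)\,\psi_{p;i,I}$, and symmetrically for the other order. The $(-1)^{p+1}$ factors square to $1$, so what remains is to compare $\varepsilon(j,I\setminus\{i\})\varepsilon(i,I)\,\psi_{p;j,I\setminus\{i\}}\psi_{p;i,I}$ with $\varepsilon(i,I\setminus\{j\})\varepsilon(j,I)\,\psi_{p;i,I\setminus\{j\}}\psi_{p;j,I}$. The key point is that on the level of the maps of full subcomplexes, both compositions equal the single inclusion-induced map $\widetilde{H}^p(\K_I)\to\widetilde{H}^p(\K_{I\setminus\{i,j\}})$ coming from $\K_{I\setminus\{i,j\}}\hookrightarrow\K_I$, because inclusion-induced maps on cohomology compose functorially; hence $\psi_{p;j,I\setminus\{i\}}\psi_{p;i,I}=\psi_{p;i,I\setminus\{j\}}\psi_{p;j,I}$. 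So everything reduces to the purely combinatorial sign identity
\begin{equation*}
\varepsilon(j,I\setminus\{i\})\,\varepsilon(i,I)=-\,\varepsilon(i,I\setminus\{j\})\,\varepsilon(j,I).
\end{equation*}

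To prove this sign identity I would assume without loss of generality that $i<j$ and simply count. Write $a=\#\{k\in I: k<i\}$ and $b=\#\{k\in I: k<j\}$; note $b\ge a+1$ since $i$ itself is counted in $b$ but not in $a$. Then $\varepsilon(i,I)=(-1)^a$ and $\varepsilon(j,I)=(-1)^b$. After deleting $i$, the set of elements below $j$ loses exactly one member, so $\varepsilon(j,I\setminus\{i\})=(-1)^{b-1}$. After deleting $j$, the elements below $i$ are unaffected, so $\varepsilon(i,I\setminus\{j\})=(-1)^a$. Plugging in, the left side is $(-1)^{b-1}(-1)^a=(-1)^{a+b-1}$ and the right side is $-(-1)^a(-1)^b=-(-1)^{a+b}=(-1)^{a+b-1}$, so they agree and the identity holds. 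Combining with the functoriality observation, the coefficient of $(d')^2$ on $\widetilde{H}^p(\K_{I\setminus\{i,j\}})$ is a difference of two equal terms, hence zero, and therefore $(d')^2=0$.

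The main obstacle is not any single step but making sure the bookkeeping of signs is airtight — in particular being careful that $\varepsilon$ is evaluated relative to the correct ambient set at each stage ($I$ versus $I\setminus\{i\}$ versus $I\setminus\{j\}$), and confirming that the functoriality of inclusion-induced maps on reduced cohomology genuinely lets us identify the two two-step compositions $\psi\circ\psi$ with a common single map. Once those two facts are pinned down, the cancellation is immediate. (This is of course the standard argument that the simplicial/\v{C}ech-style "delete a vertex" differential squares to zero, adapted to the present indexing.)
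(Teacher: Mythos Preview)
Your argument is correct: the functoriality of inclusion-induced maps on reduced cohomology identifies the two composites $\psi_{p;j,I\setminus\{i\}}\psi_{p;i,I}$ and $\psi_{p;i,I\setminus\{j\}}\psi_{p;j,I}$, and your sign count establishing $\varepsilon(j,I\setminus\{i\})\varepsilon(i,I)=-\varepsilon(i,I\setminus\{j\})\varepsilon(j,I)$ is accurate. This is exactly the standard \v{C}ech/simplicial cancellation.

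Note, however, that the paper does not supply its own proof of this lemma at all: it is stated with a citation to \cite{LPSS-2023} and used as a black box. So there is no ``paper's proof'' to compare against here; your write-up is a self-contained verification of a result the present paper simply imports.
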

Therefore we have a cochain complex 
$$C\!H^*(\ZZ_{\K}):=(H^*(\ZZ_{\K}),d')=
\bigoplus_{p\ge -1}\ (\bigoplus_{I\subset [m]} \widetilde{H}^p(\K_I),d'_p).$$

\begin{defn}
    \label{def:double cohomology}
    The bigraded double cohomology of $\ZZ_{\K}$ is defined by
    $$H\!H^*(\ZZ_{\K})=H(H^*(\ZZ_{\K}),d').$$
\end{defn}
\begin{rem}
    The bigraded double cohomology can also be defined from the Koszul
    algebra (the second isomorphism in Proposition \ref{prop: cohomology ring of Zk}). For more details, we refer to \cite[Theorem 4.4]{LPSS-2023}.
\end{rem}

For the rank of the bigraded double cohomology, we have
\begin{prop}[\cite{LPSS-2023}, Corollary 4.6]
    \label{prop: euler char is 0}
    Assume the simplicial complex $\K$ is not a simplex. Then the Euler characteristic of 
    $H\!H^*(\ZZ_{\K})$ is zero. In particular, $\opn{rank} H\!H^*(\ZZ_{\K})$
    is even. 
\end{prop}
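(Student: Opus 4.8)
The plan is to reduce the statement to a short combinatorial identity by using that the Euler characteristic is unchanged when one passes to the cohomology of a bounded cochain complex of finitely generated abelian groups. First I would fix the grading. The differential $d'$ raises $-k$ by $1$ and lowers $2l$ by $2$, so on the ordinary (topological) degree $p=-k+2l$, where $H^p(\ZZ_{\K})=\bigoplus_{-k+2l=p}H^{-k,2l}(\ZZ_{\K})$, the map $d'$ has degree $-1$. Since $\K$ is finite, $\ZZ_{\K}$ is a finite $CW$-complex, so $H^*(\ZZ_{\K})$ is finitely generated and concentrated in finitely many degrees, and $(C\!H^*(\ZZ_{\K}),d')$ is a bounded complex of finitely generated abelian groups. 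Hence
\[
\chi\big(H\!H^*(\ZZ_{\K})\big)=\sum_p(-1)^p\opn{rank}H\!H^p(\ZZ_{\K})=\sum_p(-1)^p\opn{rank}H^p(\ZZ_{\K})=\chi(\ZZ_{\K}),
\]
using that the alternating sum of ranks agrees for a complex and its cohomology. So it suffices to show $\chi(\ZZ_{\K})=0$ whenever $\K$ is not a simplex.

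For this I would feed in the Hochster decomposition $H^p(\ZZ_{\K})\cong\bigoplus_{I\subseteq[m]}\widetilde H^{p-|I|-1}(\K_I)$ from Proposition \ref{prop: cohomology ring of Zk} and reindex by $q=p-|I|-1$, which gives $\chi(\ZZ_{\K})=-\sum_{I\subseteq[m]}(-1)^{|I|}\widetilde\chi(\K_I)$, with $\widetilde\chi$ the reduced Euler characteristic. Writing $\widetilde\chi(\K_I)=\sum_{\sigma}(-1)^{|\sigma|-1}$, where the sum runs over all faces $\sigma\in\K$ with $\sigma\subseteq I$ (including $\sigma=\varnothing$, for which $|\sigma|-1=-1$), and interchanging the order of summation,
\[
\sum_{I\subseteq[m]}(-1)^{|I|}\widetilde\chi(\K_I)=\sum_{\sigma\in\K}(-1)^{|\sigma|-1}\!\!\sum_{I:\,\sigma\subseteq I\subseteq[m]}\!\!(-1)^{|I|}=\sum_{\sigma\in\K}(-1)^{|\sigma|-1}(-1)^{|\sigma|}(1-1)^{m-|\sigma|}.
\]
Every term vanishes unless $|\sigma|=m$, i.e. $\sigma=[m]$; and because $\K$ contains all singletons and is closed under subsets, $[m]\in\K$ exactly when $\K$ is the full simplex on $[m]$. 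As $\K$ is not a simplex, each term is $0$, so $\chi(\ZZ_{\K})=0$ and therefore $\chi\big(H\!H^*(\ZZ_{\K})\big)=0$.

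Finally, from $\sum_p(-1)^p\opn{rank}H\!H^p(\ZZ_{\K})=0$ one gets $\sum_{p\ \mathrm{even}}\opn{rank}H\!H^p(\ZZ_{\K})=\sum_{p\ \mathrm{odd}}\opn{rank}H\!H^p(\ZZ_{\K})$, so $\opn{rank}H\!H^*(\ZZ_{\K})$ is twice its even part, hence even. I do not expect a serious obstacle: given the Hochster decomposition and Lemma \ref{lem: d'd'=0}, everything is bookkeeping. The only point that needs care is the choice of a single integer grading in which $d'$ has odd degree — the topological degree $-k+2l$ works, whereas e.g. $k+l$ does not, since $d'$ shifts it by $2$ — so that the vanishing of the Euler characteristic genuinely forces the total rank to be even; one must also track the signs in the reindexing $q=p-|I|-1$ accurately.
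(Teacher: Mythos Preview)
The paper does not give a proof of this proposition; it is quoted from \cite{LPSS-2023} (Corollary~4.6 there) and used as a black box. So there is nothing in the present paper to compare your argument against.

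That said, your argument is correct and entirely self-contained. The two points that need checking both hold: (i) in the single grading $p=-k+2l$ the differential $d'$ has degree $-1$, so the standard rank identity $\chi(\text{complex})=\chi(\text{cohomology})$ applies and gives $\chi(H\!H^*(\ZZ_\K))=\chi(\ZZ_\K)$; and (ii) the Hochster computation $\chi(\ZZ_\K)=-\sum_{I}(-1)^{|I|}\widetilde\chi(\K_I)$ followed by interchanging the sums over $I$ and $\sigma$ correctly collapses to the single term $\sigma=[m]$, which is absent precisely when $\K$ is not the full simplex on $[m]$. The sign bookkeeping in the reindexing $q=p-|I|-1$ and in the binomial sum $(1-1)^{m-|\sigma|}$ is accurate, including the empty-face contribution.
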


For two simplicial complexes $\K$ and $\LL$, let $\K*\LL$ denote
their simplicial join. Then we have
\begin{prop}[\cite{LPSS-2023}, Theorem 6.3]
    \label{prop: simplicial join}
    If either $H^*(\ZZ_{\K})$ or $H^*(\ZZ_{\LL})$ is
    projective over the coefficients, then there is an 
    isomorphism of chain complexes
    $$C\!H^*(\ZZ_{\K*\LL})\cong C\!H^*(\ZZ_{\K})\otimes
    C\!H^*(\ZZ_{\LL}).$$
    In particular, for field coefficients, we have 
    \begin{equation*}
        \begin{split}
            H\!H^*(\ZZ_{\K*\LL})\cong &\  H\!H^*(\ZZ_{\K})\otimes
    H\!H^*(\ZZ_{\LL}),\\
    \opn{rank} H\!H^*(\ZZ_{\K*\LL})= &\  \opn{rank} H\!H^*(\ZZ_{\K})
    \opn{rank} H\!H^*(\ZZ_{\LL}).\\
        \end{split}
    \end{equation*}
\end{prop}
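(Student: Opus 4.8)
The plan is to prove the chain-complex isomorphism directly from the full-subcomplex description of $C\!H^*$ in Proposition \ref{prop: cohomology ring of Zk}, and then to read off the statements about $H\!H^*$ and its rank as formal consequences, applying the algebraic K\"unneth theorem to the tensor product of cochain complexes (which is clean over a field or when one factor is projective). Write the vertex set of $\K*\LL$ as the disjoint union $[m]\sqcup[m']$ of the vertex sets of $\K$ and $\LL$, ordered so that every vertex of $\K$ precedes every vertex of $\LL$. Every subset $K\subset[m]\sqcup[m']$ decomposes uniquely as $K=I\sqcup I'$ with $I\subset[m]$ and $I'\subset[m']$, and the full subcomplex satisfies $(\K*\LL)_{K}=\K_I*\LL_{I'}$. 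This reduces the whole problem to understanding the reduced cohomology of a simplicial join together with its behaviour under deletion of a vertex.

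First I would establish the underlying bigraded module isomorphism. For simplicial joins there is the classical chain-level identity expressing the reduced chains of a join as a degree-shifted tensor product: a $p$-simplex of $\K_I$ joined to a $q$-simplex of $\LL_{I'}$ yields a $(p+q+1)$-simplex of $\K_I*\LL_{I'}$, and this identification is a chain isomorphism with Koszul signs. Dualizing and invoking the K\"unneth theorem under the projectivity hypothesis (which removes the Tor terms) gives a natural isomorphism
\[
\widetilde{H}^n(\K_I*\LL_{I'})\cong\bigoplus_{p+q=n-1}\widetilde{H}^p(\K_I)\otimes\widetilde{H}^q(\LL_{I'}).
\]
Summing over all $I$ and $I'$ and comparing bidegrees yields a bigrading-preserving isomorphism $H^*(\ZZ_{\K*\LL})\cong H^*(\ZZ_{\K})\otimes H^*(\ZZ_{\LL})$ of the modules underlying the two cochain complexes: if $\widetilde{H}^p(\K_I)$ sits in bidegree $(-k,2l)$ with $l=|I|$ and $k=l-p-1$, and similarly $\widetilde{H}^q(\LL_{I'})$ in $(-k',2l')$, then the join summand indexed by $K=I\sqcup I'$ lands in bidegree $(-(k+k'),2(l+l'))$, since $n=p+q+1$ forces the deletion-degree of $(\K*\LL)_K$ to be $k+k'$.

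Next I would check that this isomorphism intertwines $d'$ with the tensor-product differential $d'\otimes1\pm1\otimes d'$. The key structural observation is that deleting a vertex $i$ from $K=I\sqcup I'$ either deletes a vertex of $\K$ (if $i\in I$) or of $\LL$ (if $i\in I'$), and the corresponding inclusion of full subcomplexes is the join of a vertex-deletion inclusion with an identity. By naturality of the join K\"unneth isomorphism in each factor, the restriction map $\psi$ appearing in \eqref{equ: def of dp} factors as $\psi_{p;i,I}\otimes\mathrm{id}$ or $\mathrm{id}\otimes\psi_{q;i,I'}$ accordingly, preserving the relevant cohomological degree ($p$ or $q$) while lowering $|I|$ or $|I'|$ by one. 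Thus, as maps of bigraded modules, $d'_{\K*\LL}$ splits into a ``$\K$-part'' and an ``$\LL$-part'' matching $d'_{\K}$ and $d'_{\LL}$ up to sign.

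The main obstacle is the sign bookkeeping, namely showing that the two parts assemble with exactly the Koszul signs of the tensor-product complex. Three sign sources compete: the combinatorial factor $\varepsilon(i,K)$, the prefactor $(-1)^{n+1}$ in the definition of $d'_n$ on the join, and the internal sign of the join K\"unneth isomorphism coming from the degree shift. With the chosen ordering one computes $\varepsilon(i,I\sqcup I')=\varepsilon(i,I)$ for $i\in I$ and $\varepsilon(i,I\sqcup I')=(-1)^{|I|}\varepsilon(i,I')$ for $i\in I'$, so the factor $(-1)^{|I|}$ reproduces the Koszul sign once $C\!H^*$ is graded by cardinality $|I|$. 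It then remains to verify that the discrepancy between the join prefactor $(-1)^{n+1}=(-1)^{p+q}$ and the factor prefactors $(-1)^{p+1}$, $(-1)^{q+1}$ is precisely cancelled by the shift sign in the K\"unneth isomorphism, a function of $p$ and $q$. Once this sign identity is pinned down, the module isomorphism above is promoted to an isomorphism of cochain complexes $C\!H^*(\ZZ_{\K*\LL})\cong C\!H^*(\ZZ_{\K})\otimes C\!H^*(\ZZ_{\LL})$. Passing to cohomology and applying K\"unneth once more (trivial over a field) then yields the stated isomorphism of $H\!H^*$ and the multiplicativity of ranks.
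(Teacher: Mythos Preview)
The paper does not supply its own proof of this proposition; it is quoted verbatim from \cite[Theorem 6.3]{LPSS-2023} and used as a black box, so there is no in-paper argument to compare against.

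Your outline is the natural Hochster-side proof and is essentially correct. The two structural inputs you need are exactly the ones you name: (i) full subcomplexes of a join are joins of full subcomplexes, $(\K*\LL)_{I\sqcup I'}=\K_I*\LL_{I'}$, and (ii) the K\"unneth isomorphism for reduced cochains of a join, which is natural in each factor and therefore compatible with the vertex-deletion maps $\psi$. The projectivity hypothesis is used precisely where you place it, and note that projectivity of $H^*(\ZZ_\K)=\bigoplus_I\widetilde H^*(\K_I)$ forces projectivity of every summand $\widetilde H^p(\K_I)$, so the join K\"unneth formula is available term by term. The only part you leave unfinished is the sign reconciliation; your accounting of the three sources (the $\varepsilon$-factor, the prefactor $(-1)^{n+1}$ with $n=p+q+1$, and the internal shift sign in the join isomorphism) is accurate, and once you fix an explicit sign convention for the chain-level join identification the check is mechanical. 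One small clarification worth making: when you say the $(-1)^{|I|}$ reproduces the Koszul sign ``once $C\!H^*$ is graded by cardinality $|I|$'', this is the same as grading by $-k+2l$ or by $l$ alone, since $d'$ lowers $l=|I|$ by one; but be explicit about which grading you use so the sign in $d'\otimes 1+(-1)^{?}\,1\otimes d'$ is unambiguous.
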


However, for the wedge of simplicial complex, the double cohomology 
acts differently with generalised cohomology theory.

\begin{prop}[\cite{Ruiz-2023}]
    \label{prop: double coho of wedge}
    For any wedge decomposable simplicial complex $\K$, we have
    $H\!H^*(\ZZ_{\K})\cong \Z\oplus\Z$
    in bidegrees $(0,0)$ and $(-1,4)$. 
\end{prop}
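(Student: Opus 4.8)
The plan is to compute the cochain complex $C\!H^*(\ZZ_{\K})$ directly from the wedge structure. Write $\K=\K_1\vee_v\K_2$ with vertex sets $V_1,V_2$, $V_1\cap V_2=\{v\}$, and put $V_i'=V_i\setminus\{v\}$ (both nonempty, since $\K$ is genuinely wedge decomposable); for $I\subseteq[m]$ set $I_i=I\cap V_i$. The first step is to record how each $\widetilde H^*(\K_I)$ decomposes. If $v\in I$ then $\K_I=(\K_1)_{I_1}\vee_v(\K_2)_{I_2}$ and reduced cohomology splits as the direct sum of the two lobes. If $v\notin I$ then $\K_I=(\K_1)_{I_1}\sqcup(\K_2)_{I_2}$, and the same splitting holds in every simplicial degree $p$ with two exceptions: in degree $p=0$ one extra generator $e_I$, the class separating the two lobes, appears whenever $I_1,I_2\neq\varnothing$, and degree $p=-1$ is anomalous at the (sub)empty subcomplexes. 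I would handle the single class $\widetilde H^{-1}(\K_\varnothing)=\Z$ separately: it is a $d'$-cycle that is no boundary (nothing maps into it, as $\widetilde H^{-1}$ of a point vanishes), so it always contributes the $(0,0)$ summand, with no wedge hypothesis needed.

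Since $d'$ preserves $p$, I would treat the complex one $p$ at a time. Collect the separating classes into $\mathrm{Cross}:=\bigoplus_{v\notin I,\ I_1,I_2\neq\varnothing}\Z\,e_I$, concentrated in $p=0$. The key structural point is that $\mathrm{Cross}$ is a $d'$-subcomplex: restricting a separating class $e_I$ along the removal of a vertex yields the separating class of the smaller subset while both lobes survive, and becomes $0$ once a lobe is emptied, so $d'(\mathrm{Cross})\subseteq\mathrm{Cross}$. This produces a short exact sequence of $d'$-complexes $0\to\mathrm{Cross}\to C\!H^*(\ZZ_{\K})\to Q\to 0$ which, since everything respects $p$, breaks into one long exact sequence for each $p$.

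Next I would identify the quotient and compute the two outer terms. For $p\geq 1$ the lobewise splitting is clean and natural, so the $p$-th summand of $Q$ equals that of $C\!H^*(\ZZ_{\K})$ and is a direct sum $A_1^{(p)}\oplus A_2^{(p)}$, where $A_i^{(p)}$ is the $p$-th piece of $C\!H^*(\ZZ_{\K_i})$ tensored, in the direction of the opposite lobe's vertices $V_j'$, with the augmented simplicial chain complex of the simplex on $V_j'$. For $p=0$ the same identification holds after passing to $Q$: the only way restriction could mix the two lobe-summands is through removal of $v$, but then a $\K_1$-class becomes constant on the $\K_2$-lobe and hence dies in reduced $\widetilde H^0$, while its potential $\mathrm{Cross}$-component is killed in $Q$. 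As $\widetilde C_*(\Delta^{V_j'})$ is an acyclic complex of free modules and $V_j'\neq\varnothing$, each $A_i^{(p)}$ is acyclic, so $H(Q)=\Z$ in bidegree $(0,0)$ and zero otherwise. For $\mathrm{Cross}$, after ordering the vertices so that $V_1'$ precedes $V_2'$ the signs $\varepsilon(i,I)$ assemble $d'$ into the Koszul differential of $\widetilde C_*(\Delta^{V_1'})\otimes\widetilde C_*(\Delta^{V_2'})$ (unaugmented simplex chain complexes); each factor has homology $\Z$ in degree $0$, so Künneth gives $H(\mathrm{Cross})=\Z$ supported at $|I_1|=|I_2|=1$, i.e. $|I|=2,\ p=0$, which is bidegree $(-1,4)$. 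Feeding $H(Q)$ and $H(\mathrm{Cross})$ into the degreewise-split long exact sequence gives $H\!H^{(0)}\cong H(\mathrm{Cross})$ and $H\!H^{(p)}\cong H(Q^{(p)})$ for $p\neq 0$, whence $H\!H^*(\ZZ_{\K})=\Z\oplus\Z$ in bidegrees $(0,0)$ and $(-1,4)$.

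The main obstacle is the bookkeeping in degrees $-1$ and $0$, where the naive lobewise splitting of $\widetilde H^*(\K_I)$ genuinely fails. The substance of the argument is twofold: first, proving that $\mathrm{Cross}$ is a subcomplex (restriction never leaks a lobe-class into a separating class) and that the quotient really separates into the two acyclic lobe-complexes, for which the vanishing of constants in reduced $\widetilde H^0$ is essential; and second, checking that the $\varepsilon(i,I)$ signs reproduce the Koszul tensor differential on $\mathrm{Cross}$ so that Künneth applies. Everything else reduces to the standard fact that a complex tensored with the (acyclic, free) augmented chain complex of a simplex is acyclic.
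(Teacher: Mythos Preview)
The paper does not prove this proposition; it merely cites \cite{Ruiz-2023} and uses the result as a black box. There is therefore no ``paper's own proof'' to compare against.

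Your argument is essentially correct and self-contained. The key ingredients---that $\mathrm{Cross}$ is closed under $d'$, that the quotient $Q$ decomposes (after handling $p=-1$ separately) into two pieces each of the form $(\text{complex})\otimes\widetilde C_*(\Delta^{V_j'})$ with the second factor free and acyclic, and that $\mathrm{Cross}$ itself is the tensor of two unaugmented simplex chain complexes---are all right. A few points deserve explicit care when you write this out in full. First, the splitting $\widetilde H^0(X\sqcup Y)\cong\widetilde H^0(X)\oplus\widetilde H^0(Y)\oplus\Z e$ is not canonical (the section depends on a choice of representative), so it is important that you argue in the quotient $Q$, where the ambiguity is killed; you already flag this, but the verification that a $\K_1$-class has zero $\K_2$-component in $Q$ after any restriction should be made precise by fixing the representative ``zero on the other lobe.'' Second, the identification of $d'$ on $\mathrm{Cross}$ with the Koszul differential uses the global sign $(-1)^{p+1}$ and the ordering convention; it is harmless but worth one displayed line. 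Third, the acyclicity of $A_i^{(p)}$ over $\Z$ uses that $\widetilde C_*(\Delta^{V_j'})$ is a bounded complex of free modules, so the tensor product is indeed exact; say this explicitly.

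Compared with the use made of the result in the paper, your approach gives strictly more: an independent proof rather than a citation. The strategy is the natural one (filter by the ``cross'' classes, identify the associated graded with simplex chain complexes), and no step requires anything beyond elementary homological algebra and Künneth.
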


At the end of this section, we prove a useful lemma.
\begin{lem}
    \label{lem: dim ker}
    Let $\KK$ be a field (say $\R$ or $\C$).
    Given three complex $(A_n,d'_n)_{n\ge 0}$, $(B_n,d''_n)_{n\ge 0}$
    and $(C_n,d_n)_{n\ge 0}$
    over $\KK$. Suppose the following conditions hold:
    \begin{enumerate}
        \item 
        For any $n\ge 0$, $\opn{rank} A_n$ and $\opn{rank} B_n$ are finite.
        \item 
        The complex $(B_n,d''_n)_{n\ge 0}$ is exact.
        \item 
         $C_n=A_n\oplus B_n$.
        \item 
        $d_n|_{B_{n+1}}=d''_n$, $\pi_{1,n} d_n|_{A_{n+1}}=d'_n$,
        where $\pi_{1,n}$ denotes the projective map
        $A_n\oplus B_n\to A_n$.
    \end{enumerate}
    Then for $k\ge 1$, we have
    \begin{enumerate}
        \item 
        $\opn{rank} \ker d_k=\opn{rank} \ker d'_k+\opn{rank} \ker d''_k$.
        \item 
        $\opn{rank} \opn{Im} d_k=\opn{rank} \opn{Im} d'_k+
        \opn{rank} \opn{Im} d''_k$.
    \end{enumerate}
\end{lem}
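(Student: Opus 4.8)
The strategy is to reduce the statement about $C_k = A_k \oplus B_k$ to linear algebra, exploiting that the differential $d_k$ is ``block lower-triangular'' with respect to this decomposition, with the $B$-to-$B$ block being the exact differential $d''_k$ and the $A$-to-$A$ block (after projection) being $d'_k$. First I would write, for $x = a + b$ with $a \in A_{k}$ and $b \in B_{k}$, the expansion
\[
d_k(x) = d_k(a) + d''_{k-1}(b) = d'_{k-1}(a) + \bigl(\pi_{2,k-1} d_k(a) + d''_{k-1}(b)\bigr),
\]
where $\pi_{2,k-1} : A_{k-1}\oplus B_{k-1} \to B_{k-1}$ is the other projection. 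This makes visible that $x \in \ker d_k$ forces $a \in \ker d'_{k-1}$, and then $b$ is constrained by $d''_{k-1}(b) = -\pi_{2,k-1} d_k(a)$.

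For part (1), I would analyze the projection $p : \ker d_k \to \ker d'_{k-1}$, $a + b \mapsto a$. It is surjective: given $a \in \ker d'_{k-1}$, the element $\pi_{2,k-1} d_k(a) \in B_{k-1}$ lies in $\ker d''_{k-2}$ (since $d_{k-1}d_k = 0$ and a short computation with the block structure shows $d''_{k-2}\pi_{2,k-1}d_k(a) = -\pi_{2,k-2}d_k d'_{k-1}(a) = 0$, using $d'_{k-2}d'_{k-1}=0$ as well), hence by exactness of $B_\bullet$ it equals $-d''_{k-1}(b)$ for some $b$, and then $a + b \in \ker d_k$. The kernel of $p$ is $\{b \in B_k : d''_{k-1}(b) = 0\} = \ker d''_k$, identified as a subspace of $C_k$. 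Rank-nullity for $p$ then gives $\operatorname{rank}\ker d_k = \operatorname{rank}\ker d'_k + \operatorname{rank}\ker d''_k$ (indices shifted to match the statement's convention; here I must be careful that the statement indexes $\ker d_k$ by the source $C_k$, so I will keep the bookkeeping consistent throughout).

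For part (2), I would combine part (1) with rank-nullity on each complex: $\operatorname{rank}\operatorname{Im} d_k = \operatorname{rank} C_k - \operatorname{rank}\ker d_k = (\operatorname{rank} A_k + \operatorname{rank} B_k) - (\operatorname{rank}\ker d'_k + \operatorname{rank}\ker d''_k) = \operatorname{rank}\operatorname{Im} d'_k + \operatorname{rank}\operatorname{Im} d''_k$, where finiteness (hypothesis (1)) is what lets me subtract ranks freely. The only real subtlety—and the step I expect to need the most care—is the surjectivity of $p$ in part (1): one must verify that the ``obstruction'' $\pi_{2,k-1}d_k(a)$ is actually a $d''$-cycle so that exactness of $(B_\bullet, d''_\bullet)$ can be invoked to kill it. This is where hypothesis (4) (the precise block form of $d$) and the relation $d^2 = 0$ in all three complexes are used together; everything else is routine rank-nullity.
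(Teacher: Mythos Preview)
Your argument is correct. For part~(1) it is essentially the paper's proof repackaged: where the paper picks a basis $\{x_i\}$ of $\ker d'_k$ and uses exactness of $(B_\bullet,d''_\bullet)$ to produce explicit lifts $x_i - y_i \in \ker d_k$, you phrase the same construction as surjectivity of the projection $p:\ker d_k \to \ker d'_k$ and then read off $\ker p = \ker d''_k$. (A small simplification: once $a\in\ker d'$, you have $d_k(a)\in B$ outright, so $d''d_k(a)=d_{k-1}d_k(a)=0$ directly; the extra appeal to $d'd'=0$ is unnecessary.) The only blemish is the index bookkeeping you flag yourself---in the paper's convention $d_k:C_{k+1}\to C_k$, so rank--nullity reads $\operatorname{rank}\operatorname{Im} d_k = \operatorname{rank} C_{k+1} - \operatorname{rank}\ker d_k$; just be consistent when you write it out.

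Where you genuinely diverge from the paper is part~(2). The paper repeats the style of part~(1): it chooses a basis $\{z_i\}$ of $\operatorname{Im} d'_k$ with preimages $z'_i$, uses exactness again to absorb the $B$-component, and exhibits $\operatorname{Im} d_k = \KK\{z_i + \pi_{2,k} d_k z'_i\}\oplus \operatorname{Im} d''_k$ explicitly. Your route---deduce~(2) from~(1) by rank--nullity applied to each of $d_k$, $d'_k$, $d''_k$---is shorter and avoids a second hands-on construction. The paper's version has the mild advantage of producing an explicit direct-sum decomposition of $\operatorname{Im} d_k$ (not just a rank equality), but that stronger statement is never used downstream, so your economy costs nothing.
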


\begin{proof}
    For $k\ge 1$, we consider the following map
    $$\cdots\to A_{k+2}\oplus B_{k+2}\xrightarrow{d_{k+1}}
    A_{k+1}\oplus B_{k+1}\xrightarrow{d_k} 
    A_k\oplus B_k\xrightarrow{d_{k-1}} 
    A_{k-1}\oplus B_{k-1}\to\cdots$$

(1) Assume $\opn{rank} \ker d'_k=m$ and $\ker d'_k$ is spanned by 
$\{x_i\in A_{k+1}:1\le i\le m\}$.
For any $1\le i\le m$, $d_k x_i\in B_k$ by conditions.
Then the exactness
of $B_n$ and
$$d''_{k-1}d_k x_i=d_{k-1}d_k x_i=0$$
show that for some $y_i\in B_{k+1}$, we have
$$d_kx_i=d''_ky_i=d_ky_i.$$

Suppose $d_k(x+y)=0$ for some $x\in A_{k+1}, y\in B_{k+1}$. 
Then $d'_kx=0$ and
we can write $x=\sum_{i=1}^m c_i x_i$ with $c_i\in \KK$. Now
$$x+y=\sum_{i=1}^m c_i (x_i-y_i)+\sum_{i=1}^m c_i y_i+y,$$
which shows $d_k(x+y)=d''_k(\sum_{i=1}^m c_i y_i+y)=0$. Thus
\begin{equation*}
    \begin{split}
        \ker d_k\subset &\  \KK\{x_i-y_i:1\le i\le m\}+\ker d''_k\\
        = &\  \KK\{x_i-y_i:1\le i\le m\}\oplus\ker d''_k.
    \end{split}
\end{equation*}
Clearly, $\KK\{x_i-y_i:1\le i\le m\}\oplus\ker d''_k\subset \ker d_k$.
Hence we have
$$\opn{rank} \ker d_k=\opn{rank} \ker d'_k+\opn{rank} \ker d''_k.$$

(2) Assume $\opn{rank} \opn{Im} d'_k=s$ and $\opn{Im} d'_k$ is spanned by 
$\{z_i\in A_{k}:1\le i\le s\}$, with $z_i=d'_kz'_i$ for some $z'_i\in 
A_{k+1}$.

Given $x\in A_{k+1}, y\in B_{k+1}$. We can write $d'_kx=\sum_{i=1}^s 
b_iz_i$ with $b_i\in \KK$.
Now $d'_k(x-\sum_{i=1}^s b_iz'_i)=0$ shows 
$$d_k(x-\sum_{i=1}^s b_iz'_i)\in B_k.$$
Then the exactness
of $B_n$ and
$$d''_{k-1}d_k(x-\sum_{i=1}^s b_iz'_i)
=d_{k-1}d_k(x-\sum_{i=1}^s b_iz'_i)=0$$
show that for some $w\in B_{k+1}$,
$$d_k(x-\sum_{i=1}^s b_iz'_i)=d''_kw=d_kw.$$
Thus
\begin{equation*}
    \begin{split}
        d_k(x+y)= & \ d_k(\sum_{i=1}^s b_iz'_i)+d''_k(w+y)\\
        = & \ \sum_{i=1}^s b_i(z_i+\pi_{2,k}d_kz'_i)+d''_k(w+y),
    \end{split}
\end{equation*}
where $\pi_{2,k}$ denotes the projective map $A_k\oplus B_k\to B_k$.
The above equation shows
\begin{equation*}
    \begin{split}
        \opn{Im} d_k = & \ \KK\{z_i+\pi_{2,k}d_kz'_i:1\le i\le s\}+
        \opn{Im} d''_k\\
        = & \ \KK\{z_i+\pi_{2,k}d_kz'_i:1\le i\le s\}\oplus
        \opn{Im} d''_k.
    \end{split}
\end{equation*}
Hence
$$\opn{rank} \opn{Im} d_k=\opn{rank} \opn{Im} d'_k+
\opn{rank} \opn{Im} d''_k.$$
\end{proof}
\begin{rem}
    This lemma remains valid after removing the condition (1). The proof
    is similar.
\end{rem}

\section{The proof of Theorem \ref{thm: change of rank}}
\label{sec: The proof of Theorem 1}
\subsection*{Notations}
From now on, we set the coefficient to be $\R$. In this section, we
assume the conditions of Theorem \ref{thm: change of rank} hold. We
also use $\lambda\K$ to denote $\lambda_{[n+1]}\K$.

Clearly, we have following lemma.
\begin{lem}
    \label{lem: lamda Kj and Kj}
    For any $J\subset [m]$, we have
    \begin{enumerate}
        \item 
        If there exists some $i\in [n+1]$ such that $i\not\in J$,
        then $(\lambda\K)_J=\K_J$.
        \item 
        If $[n+1]\subset J$, then $(\lambda\K)_J=\K_J\cup [n+1]$.
    \end{enumerate}
\end{lem}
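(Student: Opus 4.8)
The plan is to first reduce to a transparent combinatorial description of $\lambda\K$ and then read off both statements directly from the definition of a full subcomplex.

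First I would verify that, under the hypotheses of Theorem~\ref{thm: change of rank}, the simplices of $\lambda\K=\K\cup[n+1]$ are exactly those of $\K$ together with the single simplex $[n+1]$. Forming $\K\cup[n+1]$ means adjoining $[n+1]$ and all of its subsets; by condition (1) we have $[n+1]\notin\K$, so the only thing to check is that every proper subset of $[n+1]$ already belongs to $\K$. Since $m\ge n+2$, condition (2) with $j=n+2$ gives $([n+1]\setminus\{i\})\cup\{n+2\}\in\K$ for every $1\le i\le n+1$, and since $\K$ is closed under passing to faces, $[n+1]\setminus\{i\}\in\K$; as any proper subset $S\subsetneq[n+1]$ is contained in $[n+1]\setminus\{i\}$ for some $i\notin S$, we get $S\in\K$. (This is also exactly what makes the identification $\K\cup_{\partial\Delta^n}\Delta^n=\K\cup[n+1]$ legitimate.)

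With this description in hand, both parts are immediate. The full subcomplex $(\lambda\K)_J$ has as its simplices the simplices of $\K$ contained in $J$, together with $[n+1]$ in case $[n+1]\subseteq J$. For (1): if some $i\in[n+1]$ is not in $J$, then $[n+1]\not\subseteq J$, so $(\lambda\K)_J$ and $\K_J$ have the same simplices, i.e. $(\lambda\K)_J=\K_J$. For (2): if $[n+1]\subseteq J$, then $[n+1]$ is a simplex of $(\lambda\K)_J$ that is not in $\K_J$ while all other simplices coincide, so $(\lambda\K)_J=\K_J\cup[n+1]$. I expect no genuine obstacle here: the only substantive point is the bookkeeping in the first step showing that no new proper face of $[n+1]$ is created when passing from $\K$ to $\lambda\K$, which is precisely where condition (2) (and the hypothesis $m\ge n+2$) is used; the second step is then purely formal.
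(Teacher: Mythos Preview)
Your proposal is correct; the paper itself offers no proof beyond the word ``Clearly,'' and your argument is precisely the intended unpacking of that word. In particular, your first step---using condition (2) of Theorem~\ref{thm: change of rank} with $j=n+2$ (and $m\ge n+2$) to see that every proper face of $[n+1]$ is already in $\K$---is exactly what justifies the paper's identification $\K\cup_{\partial\Delta^n}\Delta^n=\K\cup[n+1]$, after which both parts of the lemma follow by inspection of the definition of a full subcomplex.
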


Based on Lemma \ref{lem: lamda Kj and Kj}, we can prove
\begin{lem}
    \label{lem: H*lamda Kj and H*Kj}
    For any $J\subset [m]$, we have
    \begin{enumerate}
        \item 
        $\widetilde{H}^s((\lambda\K)_J)\cong 
        \widetilde{H}^s(\K_J)$, for $s\not = n-1,n$.
        \item 
        $\widetilde{H}^*((\lambda\K)_J)\cong
        \widetilde{H}^*(\K_J)$, for $[n+1]\not\subset J$.
        \item 
        $\widetilde{H}^n((\lambda\K)_{J})=
        \widetilde{H}^n(\K_J)=\widetilde{H}^{n-1}((\lambda
        \K)_{J})=0$, 
        $\widetilde{H}^{n-1}(\K_J)=\R$, for $J=[n+1]$.
        \item 
        $\widetilde{H}^n((\lambda\K)_{J})\cong
        \widetilde{H}^n(\K_J)\oplus\R$, $\widetilde{H}^{n-1}((\lambda\K)_{J})\cong
        \widetilde{H}^{n-1}(\K_J)$, for $[n+1]\subset J$
        and $[n+1]\not = J$.
    \end{enumerate}
\end{lem}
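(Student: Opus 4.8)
The plan is to compute $\widetilde H^*((\lambda\K)_J)$ from $\widetilde H^*(\K_J)$ case by case, using the description of $\lambda\K$ given in Lemma \ref{lem: lamda Kj and Kj} together with the long exact (Mayer--Vietoris) sequence of the decomposition $(\lambda\K)_J = \K_J\cup[n+1]$. Parts (1) and (2) are essentially immediate: when $[n+1]\not\subset J$ we have $(\lambda\K)_J=\K_J$ by Lemma \ref{lem: lamda Kj and Kj}(1), so all reduced cohomology agrees; and since gluing in a single $n$-simplex only affects the $(n-1)$st and $n$th reduced cohomology, nothing changes in degrees $s\neq n-1,n$, giving (1). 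So the content is in (3) and (4), where $[n+1]\subset J$.

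For (3), with $J=[n+1]$ we have $(\lambda\K)_J=\Delta^n$, which is contractible, so $\widetilde H^*(\Delta^n)=0$, and in particular $\widetilde H^n=\widetilde H^{n-1}=0$; on the other hand condition (1) of Theorem \ref{thm: change of rank} says $[n+1]\notin\K$, so $\K_{[n+1]}=\partial\Delta^n\cong S^{n-1}$, whence $\widetilde H^{n-1}(\K_{[n+1]})=\R$ and $\widetilde H^n(\K_{[n+1]})=0$. For (4), with $[n+1]\subset J$ and $J\neq[n+1]$, I would run the Mayer--Vietoris sequence for $(\lambda\K)_J=\K_J\cup_{\partial\Delta^n}\Delta^n$. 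The intersection is $\partial\Delta^n\cong S^{n-1}$ (here one uses that $J\neq[n+1]$, so $\K_J$ genuinely contains $\partial\Delta^n$ but not the top face, which follows from condition (1)), and $\Delta^n$ is contractible, so the reduced Mayer--Vietoris sequence reads
\[
\cdots\to \widetilde H^{s-1}(\partial\Delta^n)\to \widetilde H^s((\lambda\K)_J)\to \widetilde H^s(\K_J)\to \widetilde H^s(\partial\Delta^n)\to\cdots.
\]
Since $\widetilde H^*(\partial\Delta^n)$ is $\R$ in degree $n-1$ and $0$ otherwise, this immediately gives $\widetilde H^s((\lambda\K)_J)\cong\widetilde H^s(\K_J)$ for $s\neq n-1,n$ (reproving (1) in this case) and leaves the exact piece
\[
0\to \widetilde H^{n-1}((\lambda\K)_J)\to \widetilde H^{n-1}(\K_J)\xrightarrow{\;\phi\;}\widetilde H^{n-1}(\partial\Delta^n)\to \widetilde H^n((\lambda\K)_J)\to \widetilde H^n(\K_J)\to 0.
\]
To finish I must show the connecting-type map $\phi\colon\widetilde H^{n-1}(\K_J)\to\widetilde H^{n-1}(\partial\Delta^n)=\R$ is zero. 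This is the main obstacle. The map $\phi$ is restriction along $\partial\Delta^n\hookrightarrow\K_J$; dually, it is induced by $H_{n-1}(\partial\Delta^n)\to H_{n-1}(\K_J)$, i.e.\ I need the fundamental class of $S^{n-1}=\partial\Delta^n$ to be a nonzero element of $H_{n-1}(\K_J;\R)$, equivalently that $\partial\Delta^n$ does \emph{not} bound in $\K_J$. Here conditions (2) and (4) of the theorem enter: condition (2) guarantees that each facet $[n+1]\setminus\{i\}$ together with the extra vertex $j\in J$ spans a simplex (so $\K_J$ contains the relevant cones), while condition (4) forces a uniform behaviour over all facets of $\partial\Delta^n$; one shows that if $\partial\Delta^n$ bounded an $n$-chain in $\K_J$, that chain would have to use a forbidden face. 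Concretely I would argue that any $n$-chain $c$ in $\K_J$ with $\partial c = [\partial\Delta^n]$ can be reduced, by peeling off the cones over facets guaranteed by (2), to a chain supported away from $[n+1]$, which cannot have the right boundary; the parity/consistency condition (4) is what makes this peeling well-defined. Once $\phi=0$, the six-term sequence collapses to $\widetilde H^{n-1}((\lambda\K)_J)\cong\widetilde H^{n-1}(\K_J)$ and a short exact sequence $0\to\R\to\widetilde H^n((\lambda\K)_J)\to\widetilde H^n(\K_J)\to 0$, which splits over the field $\R$, giving $\widetilde H^n((\lambda\K)_J)\cong\widetilde H^n(\K_J)\oplus\R$, as claimed in (4).

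I expect the genuinely delicate point to be precisely the vanishing of $\phi$, i.e.\ showing $[\partial\Delta^n]\neq 0$ in $H_{n-1}(\K_J;\R)$ for $[n+1]\subsetneq J$; this is where all four hypotheses of Theorem \ref{thm: change of rank} are really used (condition (3), the rank bound, will be needed afterwards when these local computations are assembled into the global double-cohomology count, not here). Everything else is a routine application of Mayer--Vietoris plus the contractibility of $\Delta^n$ and the fact that $\partial\Delta^n\simeq S^{n-1}$.
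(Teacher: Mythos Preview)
Your treatment of parts (1)--(3) is correct and matches the paper. For part (4) your Mayer--Vietoris setup is fine and is essentially equivalent to the paper's one-line observation that $(\lambda\K)_J\simeq\K_J\vee S^n$. However, the key step is stated backwards. You want the restriction map
\[
\phi\colon \widetilde H^{n-1}(\K_J)\longrightarrow \widetilde H^{n-1}(\partial\Delta^n)\cong\R
\]
to be zero. Over $\R$ this map is the linear dual of the homology pushforward $i_*\colon H_{n-1}(\partial\Delta^n)\to H_{n-1}(\K_J)$, so $\phi=0$ is equivalent to $i_*[\partial\Delta^n]=0$, i.e.\ to the boundary sphere \emph{bounding} in $\K_J$. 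You assert the opposite requirement, that $[\partial\Delta^n]$ be \emph{nonzero} in $H_{n-1}(\K_J;\R)$; that would make $\phi$ surjective, which forces $\widetilde H^{n-1}((\lambda\K)_J)$ to drop by one and $\widetilde H^n$ to stay fixed, contrary to the statement you are proving. Your proposed chain-peeling argument is therefore aimed at a false target and cannot succeed (indeed condition~(2) exhibits an explicit $n$-chain with boundary $\partial\Delta^n$).

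The actual argument is much shorter than you anticipate, and uses only condition~(2). Pick any $j\in J\setminus[n+1]$; condition~(2) says that for every $1\le i\le n+1$ the simplex $\{1,\dots,\widehat{i},\dots,n+1,j\}$ lies in $\K$, so the full cone $\{j\}*\partial\Delta^n$ sits inside $\K_J$. Hence $\partial\Delta^n$ is null-homotopic in $\K_J$, $i_*[\partial\Delta^n]=0$, and $\phi=0$. This is exactly the paper's sentence ``$\partial\Delta^n$ is contractible in $\K_J$, thus $(\lambda\K)_J\simeq\K_J\vee S^n$.'' Condition~(4) plays no role in this lemma; it is only used later in the proof of Proposition~\ref{prop: rank H n}.
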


\begin{proof}
The conclusion for $[n+1]\not\subset J$ can be deduced immediately
    from Lemma \ref{lem: lamda Kj and Kj}.

For $J=[n+1]$, the conditions of Theorem \ref{thm: change of rank} shows
$$\K_J=\partial\Delta^n \text{ and } (\lambda\K)_{J}=\Delta^n.$$

For $[n+1]\subset J$ and $[n+1]\not = J$, there exists some $n+2\le j\le m$
such that
$$\{1,\cdots,\widehat{i},\cdots,n+1\}\cup \{j\}\in \K, \text{ for any }
1\le i\le n+1.$$
In other words, $\partial \Delta^n$ is contractible in $\K_J$.
Thus $(\lambda\K)_{J}\simeq \K_J\vee S^n$.
\end{proof}

Thus we have
\begin{cor}
    \label{cor: rank H*}
    For $s\not =n-1,n$,
    $$\opn{rank} H\!H^s(\ZZ_{\lambda\K})=
    \opn{rank} H\!H^s(\ZZ_{\K}).$$
\end{cor}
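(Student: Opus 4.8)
The plan is to upgrade the pointwise isomorphisms of Lemma~\ref{lem: H*lamda Kj and H*Kj}(1) into an isomorphism of cochain complexes. Recall that
\[
C\!H^*(\ZZ_{\K})=\bigoplus_{p\ge -1}\Bigl(\bigoplus_{J\subset[m]}\widetilde H^p(\K_J),\,d'_p\Bigr),
\]
a direct sum over the simplicial cohomology degree $p$, since $d'_p=(-1)^{p+1}\sum_{i\in I}\varepsilon(i,I)\psi_{p;i,I}$ only alters the index set $I$ and leaves $p$ unchanged; write $C\!H^*_s(\ZZ_{\K})$ for the summand with $p=s$, so that the part of $H\!H^*(\ZZ_{\K})$ in simplicial degree $s$ is its cohomology. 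Thus it suffices to produce, for every $s\ne n-1,n$, an isomorphism of cochain complexes $C\!H^*_s(\ZZ_{\lambda\K})\cong C\!H^*_s(\ZZ_{\K})$.

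First I would note that the isomorphism $\widetilde H^s((\lambda\K)_J)\cong\widetilde H^s(\K_J)$ of Lemma~\ref{lem: H*lamda Kj and H*Kj}(1) is realized by the map induced by the inclusion $\K_J\hookrightarrow(\lambda\K)_J$: if $[n+1]\not\subset J$ the two full subcomplexes are equal by Lemma~\ref{lem: lamda Kj and Kj}(1), while if $[n+1]\subset J$ then $(\lambda\K)_J$ is obtained from $\K_J$ by attaching the single $n$-cell $[n+1]$ (its proper faces already belonging to $\K_J$ because $\partial\Delta^n\subset\K$), so the long exact sequence of the pair $\bigl((\lambda\K)_J,\K_J\bigr)$, whose relative cohomology is that of $S^n$ and hence concentrated in degree $n$, makes the restriction map an isomorphism in every degree $s\ne n-1,n$. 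Because the homomorphisms $\psi_{s;i,\cdot}$ are themselves restriction maps, they are natural with respect to these inclusions, so for each $i\in J$ the square with horizontal maps the restrictions from $\lambda\K$ to $\K$ and vertical maps $\psi_{s;i,J}$ and $\psi_{s;i,J\setminus\{i\}}$ commutes. The signs $(-1)^{s+1}\varepsilon(i,J)$ occurring in $d'_s$ are the same on both sides, so these isomorphisms assemble into an isomorphism $C\!H^*_s(\ZZ_{\lambda\K})\cong C\!H^*_s(\ZZ_{\K})$; passing to cohomology yields $\opn{rank}H\!H^s(\ZZ_{\lambda\K})=\opn{rank}H\!H^s(\ZZ_{\K})$ for all $s\ne n-1,n$.

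The only point I expect to require real care is verifying that the isomorphisms furnished by Lemma~\ref{lem: H*lamda Kj and H*Kj}(1) can be taken to be induced by the inclusions $\K_J\hookrightarrow(\lambda\K)_J$, since it is this compatibility that lets naturality of the $\psi$'s turn the family of isomorphisms into a chain map; everything after that is routine bookkeeping. The two remaining degrees $s=n-1$ and $s=n$, where $(\lambda\K)_J$ and $\K_J$ genuinely differ in cohomology, are postponed to the subsequent part of the argument, where Lemma~\ref{lem: dim ker} will do the work.
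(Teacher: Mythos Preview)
Your proof is correct and is precisely the argument the paper has in mind: the paper states the corollary with only ``Thus we have'' after Lemma~\ref{lem: H*lamda Kj and H*Kj}, leaving implicit exactly the naturality check you spell out. Your observation that the isomorphisms of Lemma~\ref{lem: H*lamda Kj and H*Kj}(1) are induced by the inclusions $\K_J\hookrightarrow(\lambda\K)_J$---trivially when $[n+1]\not\subset J$, and via the long exact sequence of the pair when $[n+1]\subset J$---is the right way to make this rigorous, and the compatibility with the restriction maps $\psi_{s;i,J}$ then gives the chain isomorphism as you describe.
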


Now we claim that
\begin{prop}
\label{prop: rank H n-1}
    $\opn{rank} H\!H^{n-1}(\ZZ_{\lambda\K})=
    \opn{rank} H\!H^{n-1}(\ZZ_{\K})-1.$
\end{prop}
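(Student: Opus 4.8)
The idea is to compute both sides from the cochain complex $C\!H^*(\ZZ_{\K})$ by isolating the part in cohomological degree $n-1$, that is, the segment
\[
\bigoplus_{J\subset[m]} \widetilde H^{n}(\K_J)\ \xrightarrow{\ d'_{n}\ }\ \bigoplus_{J\subset[m]} \widetilde H^{n-1}(\K_J)\ \xrightarrow{\ d'_{n-1}\ }\ \bigoplus_{J\subset[m]} \widetilde H^{n-2}(\K_J),
\]
and compare it to the analogous segment for $\lambda\K$. By Lemma \ref{lem: H*lamda Kj and H*Kj}, the only differences between $C\!H^*(\ZZ_{\lambda\K})$ and $C\!H^*(\ZZ_{\K})$ occur in the $\widetilde H^{n}$ and $\widetilde H^{n-1}$ terms, and these differences are localized at the subsets $J$ with $[n+1]\subset J$: for $J=[n+1]$ one loses a copy of $\R$ in degree $n-1$, while for $J\supsetneq[n+1]$ one gains a copy of $\R$ in degree $n$. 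So the plan is first to record exactly how the vector space $C\!H^{n}$ and $C\!H^{n-1}$ change, then to understand how $d'$ acts on the extra/missing summands, and finally to assemble the rank count.

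First I would pin down the behaviour of $d'$ on the new $\R$-summands of $\widetilde H^n((\lambda\K)_J)$ for $[n+1]\subsetneq J$ and on the disappearing $\R$-summand of $\widetilde H^{n-1}(\K_{[n+1]})$. For the latter: in $C\!H^*(\ZZ_{\K})$ the generator of $\widetilde H^{n-1}(\K_{[n+1]})=\widetilde H^{n-1}(\partial\Delta^n)=\R$ has $d'_{n-1}$ landing in $\bigoplus_{i}\widetilde H^{n-1}(\K_{[n+1]\setminus\{i\}})$, but each $[n+1]\setminus\{i\}\in\K$ (condition (1) and (2)) is a face, hence a contractible full subcomplex, so $\widetilde H^{n-1}$ vanishes there; thus this generator is a $d'_{n-1}$-cocycle. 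One must also check whether it is a coboundary, i.e.\ whether something in $\widetilde H^{n}(\K_{[n+1]})=\widetilde H^n(\partial\Delta^n)=0$ hits it — it is zero, so no, but more relevantly one must check it receives no $d'_{n-1}$-image from degree-$n$ classes of larger sets, which by bidegree reasons (the only $J'$ mapping to the $J=[n+1]$ slot under a $\psi$ are $J'=[n+1]\cup\{i\}$, and those have $\widetilde H^n(\K_{J'})$ controlled by conditions (2)–(4)) is where the combinatorial conditions enter. For the former: by condition (2), $\partial\Delta^n$ is contractible in $\K_J$ whenever $[n+1]\subsetneq J$, so $(\lambda\K)_J\simeq\K_J\vee S^n$, and the new $S^n$-class restricts, under $\psi_{n;i,J}$, compatibly with the wedge summand already present in $\K_{J\setminus\{i\}}$ when $[n+1]\subset J\setminus\{i\}$ and to zero (or into a contractible piece) otherwise. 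I expect that, after choosing the wedge splittings coherently, the sub-quotient complex formed by all the \emph{new} $\R$-summands (one for each $J$ with $[n+1]\subsetneq J$, sitting in degree $n$) is itself isomorphic to the simplicial cochain complex of a simplex on the vertex set $[m]\setminus[n+1]$ with at least one vertex removed — in any case an \emph{exact} complex, since it is the reduced cochain complex of something contractible.

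Then I would invoke Lemma \ref{lem: dim ker}. Writing $C_\bullet$ for the relevant truncation of $C\!H^*(\ZZ_{\lambda\K})$, $A_\bullet$ for the corresponding truncation of $C\!H^*(\ZZ_{\K})$ with its $\widetilde H^{n-1}(\K_{[n+1]})=\R$ summand removed, and $B_\bullet$ for the complex of new $\R$-summands described above, the hypotheses of Lemma \ref{lem: dim ker} should hold: $C_n=A_n\oplus B_n$, the differential of $\lambda\K$ restricted to $B$ is the (exact) differential $d''$, and its $A$-component agrees with $d'$ of $\K$ up to the one removed summand. The lemma then gives $\opn{rank}\ker$ and $\opn{rank}\opn{Im}$ of $d_k$ additively in terms of $A$ and $B$; since $B$ is exact its contribution cancels, and one is left comparing $H\!H^{n-1}(\ZZ_{\lambda\K})$ with the cohomology of the $A$-complex. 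Finally I would account for the single removed $\R$ in $C\!H^{n-1}(\K_{[n+1]})$: having shown it is a cocycle that is not a coboundary in $C\!H^*(\ZZ_{\K})$, deleting it drops $\opn{rank} H\!H^{n-1}$ by exactly $1$, giving $\opn{rank} H\!H^{n-1}(\ZZ_{\lambda\K})=\opn{rank} H\!H^{n-1}(\ZZ_{\K})-1$.

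The main obstacle is the second step: verifying that the complex $B_\bullet$ of new $S^n$-classes is genuinely exact and genuinely a direct-summand subcomplex of $C\!H^*(\ZZ_{\lambda\K})$ in a way compatible with Lemma \ref{lem: dim ker}. This requires choosing the homotopy equivalences $(\lambda\K)_J\simeq\K_J\vee S^n$ naturally enough across all inclusions $J\setminus\{i\}\subset J$ so that the maps $\psi_{n;i,J}$ split as claimed; conditions (3) and (4) of Theorem \ref{thm: change of rank} are presumably exactly what makes this coherent choice possible (condition (4) forcing the two-element extensions to behave uniformly, condition (3) bounding the rank so that $\widetilde H^n(\K_J)$ for $|J|=n+3$ does not introduce extra interaction). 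I would also need the degree/bidegree bookkeeping to confirm that no differential crosses between the "$n$" layer and anything outside the segment displayed above, which is immediate from $d':H^{-k,2l}\to H^{-k+1,2l-2}$ together with $-k+2l$ being fixed along $C\!H^*$.
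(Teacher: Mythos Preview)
Your displayed segment
\[
\bigoplus_{J} \widetilde H^{n}(\K_J)\xrightarrow{d'_{n}}\bigoplus_{J} \widetilde H^{n-1}(\K_J)\xrightarrow{d'_{n-1}}\bigoplus_{J} \widetilde H^{n-2}(\K_J)
\]
misreads the differential. By the paper's definition \eqref{equ: def of dp}, $d'_p$ sends $\widetilde H^{p}(\K_I)$ to $\bigoplus_{i\in I}\widetilde H^{p}(\K_{I\setminus\{i\}})$: it \emph{preserves} the simplicial cohomology degree $p$ and lowers $|I|$ by one. Consequently $C\!H^*(\ZZ_\K)$ splits as a direct sum over $p$ of the complexes $(\bigoplus_I\widetilde H^p(\K_I),d'_p)$, and $H\!H^{n-1}$ in the statement refers to the total cohomology of the $p=n-1$ summand. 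In particular the new $\R$-summands in $\widetilde H^{n}((\lambda\K)_J)$ for $[n+1]\subsetneq J$, your entire $B_\bullet$ complex, and Lemma~\ref{lem: dim ker} play no role here; they belong to the $p=n$ layer and are used only in the proof of Proposition~\ref{prop: rank H n}.

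Once this is corrected the argument is very short, and the paper carries it out directly. At $p=n-1$, Lemma~\ref{lem: H*lamda Kj and H*Kj} shows that $\bigoplus_J\widetilde H^{n-1}(\K_J)$ and $\bigoplus_J\widetilde H^{n-1}((\lambda\K)_J)$ differ only at $J=[n+1]$, where $\K$ has an extra copy of $\R$. You already checked that this class is a $d'_{n-1}$-cocycle (each $\K_{[n+1]\setminus\{i\}}$ is a simplex). For the coboundary direction one does \emph{not} look at $\widetilde H^{n}$; rather one checks that $\widetilde H^{n-1}(\K_{J'})=0$ for every $J'=[n+1]\cup\{j\}$ with $n+2\le j\le m$, which follows from condition~(2) since then $\K_{J'}=\{j\}*\partial\Delta^n$ is a cone. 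Hence the extra $\R$ is a direct summand of the $p=n-1$ cochain complex, and removing it drops $\opn{rank}H\!H^{n-1}$ by exactly one. No exactness of an auxiliary complex, and none of conditions~(3)--(4), are needed for this proposition.
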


\begin{proof}
By Lemma \ref{lem: d'd'=0}, $\widetilde{H}^{n-1}((\lambda\K)_J)
\cong \widetilde{H}^{n-1}(\K_J)$ if $J\not =[n+1]$ and
$$\widetilde{H}^{n-1}((\lambda\K)_{[n+1]})=0,\ 
\widetilde{H}^{n-1}(\K_{[n+1]})\cong\R_{[n+1]}.$$
Now the cochain complex $C\!H^*(\ZZ_{\K})$ in dimension $n-1$ are
$$(\bigoplus_{J\subset [m]}\widetilde{H}^{n-1}(\K_J),d'_{\K})
=(\bigoplus_{J\subset [m]}\widetilde{H}^{n-1}((\lambda\K)_J)\bigoplus \R_{[n+1]},d'_{\K}),$$
which satisfy $\pi_1 d'_{\K}=d'_{\lambda\K}$. Here $\pi_1$ denotes
the projective map 
$$\bigoplus_{J\subset [m]}\widetilde{H}^{n-1}((\lambda
\K)_J)\bigoplus \R_{[n+1]}\to \bigoplus_{J\subset [m]}\widetilde{H}^{n-1}((\lambda\K)_J).$$

For any $J\subset [m]$ with $[n+1]\subset J$ and $|J|=n+2$, the conditions
of Theorem \ref{thm: change of rank} show that $\K_J$ is contractible and thus $\widetilde{H}^{n-1}(\K_J)=0$. Similarly, for $J\subset [n+1]$ with $|J|=n$, $\widetilde{H}^{n-1}(\K_J)=0$. Then the following two homomorphisms
are trivial
$$\bigoplus_{J\subset [m]:|J|=n+2}\widetilde{H}^{n-1}(\K_J)
\to \widetilde{H}^{n-1}(\K_{[n+1]})\to
\bigoplus_{J\subset [m]:|J|=n}\widetilde{H}^{n-1}(\K_J).$$
Now we have
$$(\bigoplus_{J\subset [m]}\widetilde{H}^{n-1}(\K_J),d'_{\K})
=(\bigoplus_{J\subset [m]}\widetilde{H}^{n-1}((\lambda\K)_J),
d'_{\lambda\K})
\bigoplus\R_{[n+1]},$$
which implies the result.
    
\end{proof}

By Corollary \ref{cor: rank H*} and Proposition \ref{prop: rank H n-1}, in
order to prove Theorem \ref{thm: change of rank}, it suffices to prove that
\begin{prop}
    \label{prop: rank H n}
(1) $\opn{rank} H\!H^n(\ZZ_{\lambda\K})=\opn{rank} H\!H^n
(\ZZ_{\K})-1$, if there exists some $J=\{1,\cdots,n+1,$ $j,k\}$ with $n+2\le j<k\le m$ such that $\opn{rank} \widetilde{H}^n(\K_J)=1$.
    
(2) $\opn{rank} H\!H^n(\ZZ_{\lambda\K})=\opn{rank} H\!H^n
(\ZZ_{\K})+1$, otherwise.
\end{prop}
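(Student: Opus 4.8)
The plan is to analyze the cochain complex $C\!H^*(\ZZ_{\K})$ near degree $n$ by splitting off the summands indexed by sets $J$ with $[n+1]\subset J$, which are exactly the sets where $\lambda\K$ differs from $\K$. By Lemma \ref{lem: H*lamda Kj and H*Kj}, for such $J\neq[n+1]$ one has $\widetilde{H}^n((\lambda\K)_J)\cong\widetilde{H}^n(\K_J)\oplus\R$, and the new $\R$-summand comes from the fundamental class of the sphere $S^n$ in $(\lambda\K)_J\simeq\K_J\vee S^n$. So I would first set up the decomposition
\[
\bigoplus_{J\subset[m]}\widetilde{H}^n((\lambda\K)_J)
\;\cong\;
\Bigl(\bigoplus_{J\subset[m]}\widetilde{H}^n(\K_J)\Bigr)\;\oplus\;
\Bigl(\bigoplus_{\,[n+1]\subsetneq J}\R_J\Bigr),
\]
where the second factor $B_n:=\bigoplus_{[n+1]\subsetneq J}\R_J$ collects the extra spherical classes, one for each $J\supsetneq[n+1]$. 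The point is to identify how $d'_{\lambda\K}$ restricts to, and interacts with, this $B$-part, and then to apply Lemma \ref{lem: dim ker} (together with a symmetric statement in the adjacent degree $n-1$ direction) to read off $\opn{rank}\ker d'$ and $\opn{rank}\opn{Im}d'$ in degrees $n$ and $n+1$, hence $\opn{rank}H\!H^n$.

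The key computation is the differential on the $B$-part. By definition (\ref{equ: def of dp}), $d'_n$ on $\widetilde{H}^n((\lambda\K)_J)$ is the signed sum over $i\in J$ of the restriction maps $\psi_{n;i,J}$ to $\widetilde{H}^n((\lambda\K)_{J\setminus\{i\}})$. For $i\in[n+1]$, removing $i$ from $J\supsetneq[n+1]$ gives $J\setminus\{i\}\not\supset[n+1]$, so $(\lambda\K)_{J\setminus\{i\}}=\K_{J\setminus\{i\}}$ and the spherical class maps into the old part; for $i\notin[n+1]$, $J\setminus\{i\}$ still contains $[n+1]$, so the spherical classes map among themselves. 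I would argue, using condition (4) of Theorem \ref{thm: change of rank}, that for $|J|=n+3$ (i.e. $J=\{1,\dots,n+1,j,k\}$) the spherical class of $(\lambda\K)_J$ maps to zero under each $\psi_{n;i,J}$ with $i\in[n+1]$ — because either all faces $J\setminus\{i\}$ are in $\K$ (so $\widetilde{H}^n(\K_{J\setminus\{i\}})=0$ as these are $(n+1)$-element sets spanning a simplex or a contractible complex) or none are (so $\K_{J\setminus\{i\}}$ has no $n$-cells at all, again forcing the target to vanish in the right way) — and for $|J|=n+2$ (i.e. $J=\{1,\dots,n+1,j\}$) condition (2) makes $\K_J$ the boundary of an $(n+1)$-simplex with one facet removed, hence contractible, so $\widetilde{H}^n((\lambda\K)_J)=\R$ consists only of the spherical class and maps isomorphically onto $\widetilde{H}^n((\lambda\K)_{J'})$... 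I should be careful here: the precise claim I want is that the restriction of $d'$ to $B_\bullet=\bigoplus_{[n+1]\subsetneq J}\R_J$ (as $J$ ranges over sets of size $n+2$ and $n+3$) is itself a complex, and to determine its exactness. This reduces, via condition (3) ($\opn{rank}\widetilde H^n(\K_J)\le 1$ for $|J|=n+3$), to an elementary linear-algebra count over the two-term subcomplex $B_{n+1}\to B_n$.

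The heart of the argument, and the main obstacle, is to make precise the claim that the complex $C\!H^*(\ZZ_{\K})$ splits (as in the hypotheses of Lemma \ref{lem: dim ker}) into the $\lambda\K$-complex plus a small explicit complex $B_\bullet$ supported in degrees $n-1,n,n+1$, and to determine the homology of $B_\bullet$. Concretely: the new classes live in degree $n$ (one $\R_J$ for each $J\supsetneq[n+1]$); the differential $d'$ out of $B_{n+1}$-sized contributions (sets of size $n+3$) and into $B_{n-1}$ (sets of size $n+1$, where by Proposition \ref{prop: rank H n-1}'s analysis $\widetilde H^{n-1}(\K_J)=0$) has to be computed. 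I expect that $B_\bullet$ is exact exactly when there is \emph{no} $J$ of size $n+3$ with $\opn{rank}\widetilde H^n(\K_J)=1$ — wait, that's backwards from what we want. Let me reconsider: we want $\opn{rank}H\!H^n(\ZZ_{\lambda\K})=\opn{rank}H\!H^n(\ZZ_{\K})\pm1$, i.e. the homology of $C\!H^*(\ZZ_{\lambda\K})$ in degree $n$ differs from that of $C\!H^*(\ZZ_{\K})$ by exactly one. Since $C\!H^n(\ZZ_{\lambda\K})=C\!H^n(\ZZ_{\K})\oplus B_n$ with $B_n$ of rank $=\#\{J:J\supsetneq[n+1]\}$ (which is large), the extra homology is $\opn{rank}\bigl(H_n(B_\bullet,d'|_B\text{-induced})\bigr)$ shifted by the interaction terms from Lemma \ref{lem: dim ker}. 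So the real work is: (i) verify the splitting hypotheses of Lemma \ref{lem: dim ker} with $A_\bullet=C\!H^*(\ZZ_{\lambda\K})$, $C_\bullet=C\!H^*(\ZZ_{\K})$, $B_\bullet$ the complementary complex; (ii) show $B_\bullet$ is \emph{exact} in degrees $\ge n+1$ and compute $\dim\ker(B_n\to B_{n-1})-\dim\opn{Im}(B_{n+1}\to B_n)$, getting $-1$ when some size-$(n+3)$ subcomplex has $\opn{rank}\widetilde H^n=1$ and $+1$ otherwise — the sign flip being exactly the appearance or not of a nontrivial map from the size-$(n+3)$ classes to the size-$(n+2)$ classes. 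Once that local count is done, Lemma \ref{lem: dim ker} (parts 1 and 2, applied in degrees $n$ and $n+1$, together with Corollary \ref{cor: rank H*} and Proposition \ref{prop: rank H n-1}) assembles the Euler-characteristic-style bookkeeping into the stated $\pm1$. The delicate point throughout is keeping the signs $\varepsilon(i,J)$ and $(-1)^{p+1}$ from (\ref{equ: def of dp}) straight, and verifying that condition (4) genuinely forces the relevant restriction maps on spherical classes to vanish so that $B_\bullet$ really does close up into a subcomplex on the nose.
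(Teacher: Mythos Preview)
Your overall setup matches the paper's: decompose $\bigoplus_J\widetilde H^n((\lambda\K)_J)$ as $\bigl(\bigoplus_J\widetilde H^n(\K_J)\bigr)\oplus B_\bullet$ with $B_\bullet$ the span of the spherical classes $\alpha_J$ (one for each $J\supsetneq[n+1]$), identify $(B_k,d'|_B)$ with $(C_{k-n-2}(\Delta^{m-n-2}),\partial)$, and use Lemma~\ref{lem: dim ker}. But there are two genuine gaps.

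First, the roles in Lemma~\ref{lem: dim ker} are reversed in your step~(i): the larger complex is $C\!H^n(\ZZ_{\lambda\K})$, so it must be $C_\bullet$, with $A_\bullet\cong C\!H^n(\ZZ_\K)$ the quotient by $B_\bullet$. The $B_\bullet$ is a \emph{subcomplex} of the $\lambda\K$-complex (this follows because the $\alpha_J$ are all restrictions of a single class $\alpha_{[m]}\in\ker\bigl(\widetilde H^n((\lambda\K)_{[m]})\to\widetilde H^n(\K_{[m]})\bigr)$, so $\alpha_J=0$ whenever $[n+1]\not\subset J$; condition~(4) is \emph{not} needed for this).

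Second, and more seriously, your step~(ii) misidentifies the source of the $\pm1$. The complex $B_\bullet$ is \emph{identical} in both cases --- it is the chain complex of $\Delta^{m-n-2}$, which is exact except that $B_{n+2}/\opn{Im}(B_{n+3}\to B_{n+2})\cong\R$. So Lemma~\ref{lem: dim ker} only applies for $|J|\ge n+3$; at $|J|=n+2$ a direct argument is required, and this is where the dichotomy enters. The key is the \emph{cross term} $\pi_{2,n+2}\,d'_{\lambda\K}|_{A_{n+3}}:A_{n+3}\to B_{n+2}$. Writing $C'=\bigoplus_{[n+1]\subset J,\,|J|=n+3}\widetilde H^n((\lambda\K)_J)/\R\alpha_J\cong\bigoplus\widetilde H^n(\K_J)$, condition~(4) forces $d'_{\K,n+2}|_{C}=0$, hence $d'_{\lambda\K,n+2}(C')\subset B_{n+2}$. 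In case~(2) one has $C'=0$, so $\opn{Im}d'_{\lambda\K,n+2}$ misses that extra $\R\subset B_{n+2}$, producing the $+1$. In case~(1) there is a nonzero $\beta'\in C'$ supported on some $J_0$, and the paper's decisive step is to show $d'_{\lambda\K,n+2}(\beta')=a\,\alpha_{J_0\setminus\{j\}}+b\,\alpha_{J_0\setminus\{k\}}$ with $a+b\ne0$ (by comparing with $d'(\alpha_{J_0})$ and using that the restriction $\widetilde H^n((\lambda\K)_{J_0})\to\widetilde H^n((\lambda\K)_{J_0\setminus\{j\}})\oplus\widetilde H^n((\lambda\K)_{J_0\setminus\{k\}})$ is an isomorphism), whereas every element of $d'(B_{n+3})$ has coefficient sum zero. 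Hence $d'_{\lambda\K,n+2}(C'\oplus B_{n+3})=B_{n+2}$, killing the extra $\R$ and yielding the $-1$. Your proposal does not contain this argument, and without it the case split cannot be established.
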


\begin{proof}
By Lemma \ref{lem: H*lamda Kj and H*Kj}, for $J\subset [m]$,
\begin{equation*}
    \begin{split}
        \widetilde{H}^n((\lambda\K)_{J})\cong
        \begin{cases}
        \widetilde{H}^n(\K_J)\oplus\R, & \text{ if }
        [n+1]\subset J \text{ and } J\not =[n+1],\\
        \widetilde{H}^n(\K_J), & \text{ otherwise}.
        \end{cases}
    \end{split}
\end{equation*}
We choose a nonzero element $\alpha_{[m]}\in \widetilde{H}^n((\lambda\K)_{[m]})$ such that
$\alpha_{[m]}$ is in the kernel of $\widetilde{H}^n((\lambda
\K)_{[m]})\to \widetilde{H}^n(\K_{[m]})$,
which is induced by the inclusion. For example, we can choose
$\alpha_{[m]}=\sum_{i=n+2}^m (\partial\{1,\cdots,n+1,i\})^*$, where 
$(\partial\{1,\cdots,n+1,i\})^*$ denotes the dual of 
$\partial\{1,\cdots,n+1,i\}$.

For any $J\subset [m]$, let $\alpha_J$
be the image of $\alpha_{[m]}$ in the homomorphism 
$$\widetilde{H}^n((\lambda\K)_{[m]})\to \widetilde{H}^n((\lambda
\K)_J)$$
induced by inclusion. The commutative diagram
$$\xymatrix{
& \widetilde{H}^n((\lambda\K)_{[m]})\ar@{->}[d] \ar@{->}[r]
& \widetilde{H}^n((\lambda\K)_J)\ar@{->}[d]\\
& \widetilde{H}^n(\K_{[m]})\ar@{->}[r]
& \widetilde{H}^n(\K_J)\\
}$$
shows $\alpha_J\in \ker (\widetilde{H}^n((\lambda\K)_J)\to \widetilde{H}^n(\K_J))$.

For $J\subset [m]$ with $[n+1]\subset J$, $J\not =[n+1]$, 
the conditions of Theorem \ref{thm: change of rank} imply
$\alpha_J\in \widetilde{H}^n((\lambda\K)_J)$ is nonzero.
The fact $(\lambda\K)_J\simeq \K_J\vee S^n$ shows the homomorphism
$\widetilde{H}^n((\lambda\K)_J)\to \widetilde{H}^n(\K_J)$
induced by inclusion
is surjective and the kernel is $\R[\alpha_J]$.
Thus the inclusion induced an isomorphism
$$\widetilde{H}^n((\lambda\K)_J)/\R[\alpha_J]\cong
\widetilde{H}^n(\K_J).$$

For other $J\subset [m]$, by
Lemma \ref{lem: d'd'=0}, the inclusion induces an isomorphism 
$$\widetilde{H}^n((\lambda\K)_J)\cong \widetilde{H}^n(\K_J),$$
which shows $\alpha_J=0$.

Let 
$$A_k=\bigoplus_{J\subset [m]:|J|=k} \widetilde{H}^n((\lambda\K)_J)/\R[\alpha_J],\ 
B_k=\bigoplus_{J\subset [m]:|J|=k} \R[\alpha_J].$$
Then the cochain complex $(\bigoplus_{J\subset [m]} \widetilde{H}^n((\lambda\K)_J),d'_{\lambda\K})$ satisfies
\begin{enumerate}
    \item 
    $\bigoplus_{J\subset [m]:|J|=k} \widetilde{H}^n((\lambda\K)_J)
    = A_k\oplus B_k$.
    \item 
    $(A_k,\pi_{1,k}d'_{\lambda\K,{k-1}})\cong (\bigoplus_{J\subset [m]:|J|=k} \widetilde{H}^n(\K_J),d'_{\K,{k-1}})$, which is induced by inclusion. 
    Here $\pi_{1,k}$ denotes the projective map: $A_k\oplus B_k\to A_k$.
\end{enumerate}

For $J\subset [m]$ with $[n+1]\subset J$, $J\not =[n+1]$, we can write
$$J=\{1,\cdots,n+1,i_1,\cdots,i_j\},$$
with some $j\ge 1$ and $n+2\le i_1< \cdots < i_j\le m$.
Then \eqref{equ: def of dp} shows 
\begin{equation}
\label{eq: d alpha}
d'_{\lambda\K}(\alpha_J)=
\sum_{k=1}^j (-1)^k \alpha_{J\setminus \{i_k\}}.    
\end{equation}

Consider the simplex $\Delta^{m-n-2}$ on $[m-n-1]$.
For $j\ge 0$, its group of $j$-chains $C_j(\Delta^{m-n-2})$ is spanned by
all $\{i_1,\cdots,i_{j+1}\}$ with $1\le i_1<\cdots <i_{j+1}\le m-n-1$.
The boundary operator 
$\partial_j:C_{j+1}(\Delta^{m-n-2})\to C_{j}(\Delta^{m-n-2})$
is defined by
$$\partial_j \{i_1,\cdots,i_{j+2}\}
=\sum_{k=1}^{j+2} (-1)^{k-1} \{i_1,\cdots,i_{j+2}\}
\!\setminus\! \{i_k\}.$$
The fact $H^*(\Delta^{m-n-2})=H^0(\Delta^{m-n-2})=\R$ shows 
$\ker \partial_j=\opn{Im} \partial_{j+1}$ for $j\ge 0$
and $C_0(\Delta^{m-n-2})/\opn{Im} \partial_0=\R$.

For $k\ge n+2$,
the complexes $(B_k,d'_{\lambda\K,k-1}|_{B_k})$, 
$(C_{k-n-2}(\Delta^{m-n-2}),\partial_{k-n-3})$ are isomorphic.
Thus by Lemma \ref{lem: dim ker}, for $k\ge n+3$,
\begin{equation}
\label{eq: rank ker Im for k>=n+3}
    \begin{split}
        \opn{rank}\ker d'_{\lambda\K,k}= &\ 
        \opn{rank}\ker d'_{\K,k}+
        \opn{rank}\ker \partial_{k-n-2},\\
        \opn{rank}\opn{Im} d'_{\lambda\K,k}= &\ 
        \opn{rank}\opn{Im} d'_{\K,k}+
        \opn{rank}\opn{Im} \partial_{k-n-2}.\\ 
    \end{split}
\end{equation}
Then for $k\ge n+3$, we have
\begin{equation}
    \label{eq: k>=n+4}
    \opn{rank}(\ker d'_{\lambda\K,k}/
    \opn{Im} d'_{\lambda\K,k+1})
    = \opn{rank}(\ker d'_{\K,k}/
    \opn{Im} d'_{\K,k+1}).
\end{equation}
Note that $B_k=0$ when $k\le n+1$. Thus for $k\le n$, \eqref{eq: k>=n+4}
is also valid.

Now we write 
$$\bigoplus_{J\subset [m]:|J|=n+3} \widetilde{H}^n(\K_J)=C\bigoplus D,$$
where
$$C=\bigoplus_{\substack{J\subset [m]:|J|=n+3\\ [n+1]\subset J}} \widetilde{H}^n(\K_J),\ 
D=\bigoplus_{\substack{J\subset [m]:|J|=n+3\\ [n+1]\not\subset J}} \widetilde{H}^n(\K_J).$$

We claim that $d'_{\K,n+2}|_C=0$. 

In fact, given $|J|=n+3$, $[n+1]\subset J$. If $J\!\setminus\! \{i\}\in \K$ for all $1\le i\le n+1$, then for any $j\in J$, $\K_{J\!\setminus\! \{j\}}$
is contractible and the claim is trivial. 

If $J\!\setminus\! \{i\}\not\in \K$ for all $1\le i\le n+1$,
then for any $j\in J$,
the conditions of Theorem \ref{thm: change of rank}
imply $\K_J\simeq \K_{J\setminus \{j\}}\vee S^n\vee \{\text{other spheres which may be empty}\}$. Then
$$\opn{rank} \widetilde{H}^n(\K_{J\setminus \{j\}})\le
\opn{rank} \widetilde{H}^n(\K_J)-1\le 0,$$
which shows $\widetilde{H}^n(\K_{J\setminus \{j\}})=0$.

Now we can write
$$\bigoplus_{J\subset [m]:|J|=n+3}
\widetilde{H}^n((\lambda\K)_J)=C'\bigoplus D'\bigoplus B_{n+3},$$
where
$$C'=\bigoplus_{\substack{J\subset [m]:|J|=n+3\\ [n+1]\subset J}} \widetilde{H}^n((\lambda\K)_J)/\R[\alpha_J],$$
$$D'=\bigoplus_{\substack{J\subset [m]:|J|=n+3\\ [n+1]\not\subset J}} \widetilde{H}^n((\lambda\K)_J)/\R[\alpha_J].$$
Above claim shows $d'_{\lambda\K,n+2}(C')\subset B_{n+2}$.
By the definition of $B_{n+2}$, we have $d'_{\lambda
\K,n+2}(D')\subset A_{n+2}$.

Hence we have
\begin{equation*}
    \begin{split}
        \ker d'_{\K,n+2}= &\ C\bigoplus \ker d'_{\K,n+2}|_D,\\
        \ker d'_{\lambda\K,n+2} = &\ \ker d'_{\lambda\K,n+2}|_{D'}\bigoplus
        \ker d'_{\lambda\K,n+2}|_{C'\oplus B_{n+3}}\\
        \cong &\ \ker d'_{\K,n+2}|_D\bigoplus
        \ker d'_{\lambda\K,n+2}|_{C'\oplus B_{n+3}},
    \end{split}
\end{equation*}
and 
\begin{equation*}
    \begin{split}
        \opn{Im} d'_{\K,n+2}= &\ \opn{Im} d'_{\K,n+2}|_D,\\
        \opn{Im} d'_{\lambda\K,n+2} = &\ \opn{Im} d'_{\lambda\K,n+2}|_{D'}\bigoplus
        \opn{Im} d'_{\lambda\K,n+2}|_{C'\oplus B_{n+3}}\\
        \cong &\ \opn{Im} d'_{\K,n+2}|_D\bigoplus
        \opn{Im} d'_{\lambda\K,n+2}|_{C'\oplus B_{n+3}}.
    \end{split}
\end{equation*}

Now we divide two cases.

(1) Assume there exists some $J_0=\{1,\cdots,n+1,$ $j,k\}$ with $n+2\le j<k\le m$ such that $\opn{rank} \widetilde{H}^n(\K_{J_0})=1$.
Then $d'_{\lambda\K,n+2}|_{C'}\subset B_{n+2}$ implies
$$d'_{\lambda\K,n+2}
(\widetilde{H}^n((\lambda\K)_{J_0}))
\subset \R\{\alpha_{J_0\setminus\{j\}}\}\bigoplus \R\{\alpha_{J_0\setminus\{k\}}\}.$$

Now we claim $d'_{\lambda\K,n+2}(C'\oplus B_{n+3})=B_{n+2}$. 
Note that
$$B_{n+2}/d'_{\lambda\K,n+2}(B_{n+3})
\cong C_0(\Delta^{m-n-2})/\opn{Im} \partial_0=\R.$$

Since by the conditions of Theorem \ref{thm: change of rank}, we can write
\begin{equation*}
    \begin{split}
        \widetilde{H}^n((\lambda\K)_{J_0})= & \
        \R \{ \partial(J_0\!\setminus\!\{j\})^*\} \bigoplus
        \R \{ \partial(J_0\!\setminus\!\{k\})^*\},\\
        \widetilde{H}^n((\lambda\K)_{J_0\setminus \{j\}})= & \
        \R \{ \partial(J_0\!\setminus\!\{j\})^*\},\\
        \widetilde{H}^n((\lambda\K)_{J_0\setminus \{k\}})= & \
        \R \{ \partial(J_0\!\setminus\!\{k\})^*\},\\
    \end{split}
\end{equation*}
where $\partial(J_0\!\setminus\!\{j\})^*,\partial(J_0\!\setminus\!\{k\})^*$ denote
the dual of $\partial(J_0\!\setminus\!\{j\}),\partial(J_0\!\setminus\!\{k\})$.
Thus the inclusion induce an isomorphism
$$\widetilde{H}^n((\lambda\K)_{J_0})
\cong \widetilde{H}^n((\lambda\K)_{J_0\setminus\{j\}})
\bigoplus \widetilde{H}^n((\lambda\K)_{J_0\setminus\{k\}})$$

Now we choose a nonzero element $\beta\in \widetilde{H}^n(\K_{J_0})$
and let $\beta'$ to be a preimage of $\beta$ under the epimorphism
$$\widetilde{H}^n((\lambda\K)_{J_0})
\to \widetilde{H}^n(\K_{J_0}),$$
which is induced by inclusion. From the definition of $\alpha_{J_0}$,
we have
\begin{equation*}
    \begin{split}
        \widetilde{H}^n((\lambda\K)_{J_0})= &\
        \R\{\alpha_{J_0}\}\bigoplus \R\{\beta'\},\\
        \widetilde{H}^n((\lambda\K)_{J_0\setminus\{j\}})= &\
        \R\{\alpha_{J_0\setminus\{j\}}\},\\
        \widetilde{H}^n((\lambda\K)_{J_0\setminus\{k\}})= &\
        \R\{\alpha_{J_0\setminus\{k\}}\}.\\
    \end{split}
\end{equation*}
Thus $d'_{\lambda\K,n+2}(\alpha_{J_0})$ and $d'_{\lambda\K,n+2}(\beta')$ are linearly independent.
Note that \eqref{eq: d alpha} shows for any element in
$d'_{\lambda\K,n+2}(B_{n+3})$,
the sum of coefficients is $0$.
Thus for some $a,b\in \R$ with $a+b\not=0$,
$$d'_{\lambda\K,n+2}(\beta')=a\alpha_{J_0\setminus\{j\}}+
b\alpha_{J_0\setminus\{k\}}\not\in d'_{\lambda\K,n+2}(B_{n+3}),$$
which shows the claim.

Now we have
\begin{equation*}
    \begin{split}
        & \ \opn{rank} \ker d'_{\lambda\K,n+2}\\
        = & \  \opn{rank} \ker d'_{\K,n+2}|_D
        + \opn{rank} \ker d'_{\lambda\K,n+2}|_{C'\oplus B_{n+3}}\\
        = & \  \opn{rank} \ker d'_{\K,n+2}|_D
        + \opn{rank} C' + \opn{rank} B_{n+3} - \opn{rank} B_{n+2}\\
        = & \  \opn{rank} \ker d'_{\K,n+2}|_D
        + \opn{rank} C + \opn{rank} C_1(\Delta^{m-n-2})
        -\opn{rank} C_0(\Delta^{m-n-2})\\
        = & \  \opn{rank} \ker d'_{\K,n+2}+
        \opn{rank} \ker \partial_0 +\opn{rank} \opn{Im} \partial_0
        -(\opn{rank} \opn{Im} \partial_0+1)\\
        = & \ \opn{rank} \ker d'_{\K,n+2}+
        \opn{rank} \ker \partial_0-1,
    \end{split}
\end{equation*}
and by \eqref{eq: rank ker Im for k>=n+3},
$$\opn{rank} \opn{Im} d'_{\lambda\K,n+3}
= \opn{rank} \opn{Im} d'_{\K,n+3} + \opn{rank} \opn{Im} \partial_1.$$
Hence,

\begin{equation}
\label{eq: rank n+2 1 case}
    \begin{split}
         & \ \opn{rank} (\ker d'_{\lambda\K,n+2}/\opn{Im} d'_{\lambda\K,n+3})\\
        = & \ \opn{rank} \ker d'_{\lambda\K,n+2}-
        \opn{rank} \opn{Im} d'_{\lambda\K,n+3}\\
        = & \ \opn{rank} (\ker d'_{\K,n+2}/\opn{Im} d'_{\K,n+3})-1.
    \end{split}
\end{equation}

Since $B_{n+1}=0$, we also have
\begin{equation}
\label{eq: rank n+1 1 case}
    \begin{split}
         & \ \opn{rank} (\ker d'_{\lambda\K,n+1}/\opn{Im} d'_{\lambda\K,n+2})\\
        = & \ \opn{rank} \ker d'_{\lambda\K,n+1}-
        \opn{rank} \opn{Im} d'_{\lambda\K,n+2}\\
        = & \ \opn{rank} \ker d'_{\K,n+1}+\opn{rank} B_{n+2}-
        (\opn{rank} \opn{Im} d'_{\K,n+2}|_D+
        \opn{rank} B_{n+2})\\
        = & \ \opn{rank} (\ker d'_{\K,n+1}/\opn{Im} d'_{\K,n+2}).
    \end{split}
\end{equation}
By \eqref{eq: rank ker Im for k>=n+3}, \eqref{eq: rank n+1 1 case}
and \eqref{eq: rank n+2 1 case}, we have
$$\opn{rank} H\!H^n(\ZZ_{\lambda\K})=\opn{rank} H\!H^n
(\ZZ_{\K})-1.$$

(2) Assume $\widetilde{H}^n(\K_J)=0$ 
for any $J\subset [m]$ with $[n+1]\subset J,|J|=n+3$.
Then $C=C'=0$ and we have
\begin{equation*}
    \begin{split}
        & \ \opn{rank} \ker d'_{\lambda\K,n+2}\\
        = & \  \opn{rank} \ker d'_{\K,n+2}|_D
        + \opn{rank} \ker d'_{\lambda\K,n+2}|_{B_{n+3}}\\
        = & \  \opn{rank} \ker d'_{\K,n+2}
        + \opn{rank} \ker \partial_0.\\
    \end{split}
\end{equation*}
Then we have
\begin{equation}
    \label{eq: rank ker Im 2 case 1}
    \begin{split}
        & \ \opn{rank} (\ker d'_{\lambda\K,n+2}/\opn{Im} d'_{\lambda\K,n+3})\\
        = & \ \opn{rank} \ker d'_{\K,n+2}
         + \opn{rank} \ker \partial_0
         - ( \opn{rank} \opn{Im} d'_{\K,n+3} + \opn{rank} \opn{Im} \partial_1 )\\
         = & \ \opn{rank} (\ker d'_{\K,n+2}/\opn{Im} d'_{\K,n+3}).\\
    \end{split}
\end{equation}

Since $B_{n+1}=0$, we have
\begin{equation}
    \label{eq: rank ker Im 2 case 2}
    \begin{split}
         & \ \opn{rank} (\ker d'_{\lambda\K,n+1}/\opn{Im} d'_{\lambda\K,n+2})\\
        = & \ \opn{rank} \ker d'_{\lambda\K,n+1}-
        \opn{rank} \opn{Im} d'_{\lambda\K,n+2}\\
        = & \ \opn{rank} \ker d'_{\K,n+1}+\opn{rank} B_{n+2}
        -
        (\opn{rank} \opn{Im} d'_{\K,n+2}|_D+
        \opn{rank} \opn{Im}
        d'_{\lambda\K,n+2}|_{B_{n+3}})\\
        = & \ \opn{rank} \ker d'_{\K,n+1}+\opn{rank} C_0(\Delta^{m-n-2})-
        (\opn{rank} \opn{Im} d'_{\K,n+2}|_D+
        \opn{rank}\opn{Im}\partial_0)\\
        = & \ \opn{rank} (\ker d'_{\K,n+1}/\opn{Im} d'_{\K,n+2})+1.
    \end{split}
\end{equation}
By \eqref{eq: k>=n+4}, \eqref{eq: rank ker Im 2 case 1} and \eqref{eq: rank ker Im 2 case 2}, we have
$$\opn{rank} H\!H^n(\ZZ_{\lambda\K})=\opn{rank} H\!H^n
(\ZZ_{\K})+1.$$

\end{proof}

\section{The proof of Theorem \ref{thm: complex with any even rank}}
\label{sec: the proof of thm 2}
In Theorem \ref{thm: change of rank}, let $n=1$ and we have
\begin{prop}
    \label{prop: the change of rank n=1}
    Let $\Delta^1$ be the $1$-simplex on $[2]$ and $\K$ be a simplicial complex on $[m]$ with $m\ge 3$. Suppose the following conditions hold:
    \begin{enumerate}
    \item 
    $[2]\not\in \K$.
    \item 
    For any 
    $3\le j\le m$, 
    $\{1,j\}, \{2,j\}\in \K$. 
    \item 
    For any $J=\{1,2,j,k\}$ with $3\le j<k\le m$, 
    $\opn{rank} \widetilde{H}^1(\K_J)\le 1$.
    \item 
    For any $J=\{1,2,j,k\}$ with $3\le j<k\le m$, 
    either $J\!\setminus\! \{1\},J\!\setminus\! \{2\}\in\K$,
    or $J\!\setminus\! \{1\},J\!\setminus\! \{2\}\not\in\K$.
    \end{enumerate}
    Let $\lambda_{[2]}\K=\K\cup [2]$. If there exists some $J=\{1,2,j,k\}$ with $3\le j<k\le m$ such that $\opn{rank} \widetilde{H}^1(\K_J)=1$,
    then we have
    $$\opn{rank} H\!H^*(\ZZ_{\lambda_{[2]}\K})=\opn{rank} H\!H^*(\ZZ_{\K})-2.$$
\end{prop}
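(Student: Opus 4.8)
The plan is to deduce this proposition directly from Theorem~\ref{thm: change of rank} by specializing to $n=1$; no new argument is required, and the only real work is to check that the hypotheses correspond.

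First I would set $n=1$ everywhere in Theorem~\ref{thm: change of rank}. Then $\Delta^n=\Delta^1$ is the $1$-simplex on $[n+1]=[2]$, the inequality $m\ge n+2\ge 3$ becomes $m\ge 3$ (which is assumed), $\widetilde{H}^n=\widetilde{H}^1$, and $\lambda_{[n+1]}\K=\K\cup[n+1]=\K\cup[2]=\lambda_{[2]}\K$. Now I would match the four conditions one by one. Condition (1) of Theorem~\ref{thm: change of rank}, namely $[n+1]\notin\K$, is $[2]\notin\K$, which is condition (1) here. Condition (2), which for $J=\{1,\cdots,n+1,j\}=\{1,2,j\}$ with $n+2\le j\le m$ (i.e.\ $3\le j\le m$) and $1\le i\le n+1=2$ requires $J\setminus\{i\}\in\K$, unwinds to $\{1,j\}\in\K$ and $\{2,j\}\in\K$ for every $3\le j\le m$, which is condition (2) here. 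Condition (3), $\opn{rank}\widetilde{H}^n(\K_J)\le 1$ for $J=\{1,\cdots,n+1,j,k\}=\{1,2,j,k\}$ with $n+2\le j<k\le m$, becomes $\opn{rank}\widetilde{H}^1(\K_J)\le 1$ for $3\le j<k\le m$, which is condition (3) here. Condition (4), which for each such $J$ asks that either $J\setminus\{i\}\in\K$ for all $1\le i\le n+1=2$ or $J\setminus\{i\}\notin\K$ for all such $i$, unwinds to: either $J\setminus\{1\}$ and $J\setminus\{2\}$ both lie in $\K$, or neither does, which is condition (4) here.

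With the hypotheses of Theorem~\ref{thm: change of rank} verified in the case $n=1$, the extra assumption of the proposition --- that there is some $J=\{1,2,j,k\}$ with $3\le j<k\le m$ and $\opn{rank}\widetilde{H}^1(\K_J)=1$, equivalently $\opn{rank}\widetilde{H}^n(\K_J)=1$ --- places us precisely in case (1) of the conclusion of Theorem~\ref{thm: change of rank}. That case asserts $\opn{rank} H\!H^*(\ZZ_{\lambda_{[n+1]}\K})=\opn{rank} H\!H^*(\ZZ_{\K})-2$, i.e.\ $\opn{rank} H\!H^*(\ZZ_{\lambda_{[2]}\K})=\opn{rank} H\!H^*(\ZZ_{\K})-2$, as claimed.

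Since this is a pure specialization, there is no genuine obstacle; the one thing to be careful about is the index bookkeeping ($[2]$ versus $[n+1]$, the range $3\le j<k\le m$ versus $n+2\le j<k\le m$, and $\widetilde{H}^1$ versus $\widetilde{H}^n$). If a self-contained proof were wanted one could instead rerun the computation of Section~\ref{sec: The proof of Theorem 1} with $n=1$, but that would simply reproduce the argument already given there, so invoking Theorem~\ref{thm: change of rank} is the natural route.
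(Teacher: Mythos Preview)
Your proposal is correct and matches the paper's approach exactly: the paper simply writes ``In Theorem~\ref{thm: change of rank}, let $n=1$'' and states the proposition, so you have supplied the (straightforward) hypothesis-matching that the paper leaves implicit. Nothing further is needed.
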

Now we begin the proof of Theorem \ref{thm: complex with any even rank}.
\begin{proof}[Proof of Theorem \ref{thm: complex with any even rank}]
    It suffices to construct a family of simplicial complexes
$\{\K_{2r}:r\ge 1\}$ such that the following two conditions hold for any
$r$:
\begin{enumerate}
    \item 
    $\opn{rank} H\!H^*(\ZZ_{\K_{2r}})=2r$.
    \item 
    There  exist two vertices of $\K_{2r}$, say $x_{2r},y_{2r}$, such
    that $\{x_{2r},y_{2r}\}\not\in \K_{2r}$.
\end{enumerate}

Let $\K_4=\{\{1\},\{2\},\{3\},\{4\},\{1,2\},\{2,3\},\{3,4\},\{1,4\}\}$ be the square. Then \cite[Example 6.4]{LPSS-2023} shows
$\opn{rank} H\!H^*(\ZZ_{\K_4})=4$. Let $x_4=1,y_4=3$.

Note that $\K_4\simeq S^1$. By Proposition \ref{prop: the change of rank n=1}, $\opn{rank} H\!H^*(\ZZ_{\lambda_{\{1,3\}}\K_4})=2$. Now
we let $\K_2=\lambda_{\{1,3\}}\K_4=\K_4\cup \{1,3\}$ and
$x_2=2,y_2=4$.

For $2r\ge 6$, suppose we have constructed the simplicial complexes $\K_{2t}$
for $1\le t\le r-1$ such that above two conditions hold.
Then we divide two cases to construct $\K_{2r}$.

(1) Assume $r$ is even. Note that $\opn{rank} H\!H^*(\{A,B\})=2$
by \cite[Theorem 6.5]{LPSS-2023}.
Let $\K_{2r}=\K_r*\{A,B\}$. Then by Proposition \ref{prop: simplicial join}, 
$$\opn{rank} H\!H^*(\K_{2r})=2\opn{rank} H\!H^*(\K_r)=2r.$$
Let $x_{2r}=A,y_{2r}=B$ and thus $\{x_{2r},y_{2r}\}\not\in \K_{2r}$.

(2) Assume $r$ is odd.
Let $\K_{2r}=\lambda_{\{A,B\}}(\K_{r+1}*\{A,B\})$.
We denote $\LL=\K_{r+1}*\{A,B\}$. 

For any two vertices $a,b$ of $\K_{r+1}$,
by the definition
of simplicial join, if $\{a,b\}\not\in \K_{r+1}$, then $\LL_{\{a,b,A,B\}}$
is a square. If $\{a,b\}\in \K_{r+1}$, then $\LL_{\{a,b,A,B\}}$
is contractible and $\{a,b,A\},\{a,b,B\}\in \LL_{\{a,b,A,B\}}$.
Thus we have
\begin{equation*}
    \begin{split}
        \LL_{\{a,b,A,B\}}\simeq
        \begin{cases}
            S^1, & \text{ if } \{a,b\}\not\in \K_{r+1},\\
            pt, & \text{ if } \{a,b\}\in \K_{r+1}.
        \end{cases}
    \end{split}
\end{equation*}
In particular, $\LL_{\{x_{r+1},y_{r+1},A,B\}}\simeq S^1$. Thus
by Proposition \ref{prop: the change of rank n=1},
$$\opn{rank} H\!H^*(\K_{2r})=2\opn{rank} H\!H^*(\K_{r+1})-2=2r.$$
Let $x_{2r}=x_{r+1},y_{2r}=y_{r+1}$. Then 
$\{x_{r+1},y_{r+1}\}\not\in \K_{r+1}$ implies
$\{x_{2r},y_{2r}\}\not\in \K_{2r}$.

\end{proof}

\bibliographystyle{plain}
\bibliography{Ref}

\end{document}